\newtheorem{thm}{Theorem}[section]
\newtheorem{lem}[thm]{Lemma}
\newtheorem{cor}[thm]{Corollary}
\newtheorem{prop}[thm]{Proposition}
\newtheorem{ex}[thm]{Example}
\newtheorem*{prob*}{Open problem}
\theoremstyle{definition}
\newtheorem{defi}[thm]{Definition}
\theoremstyle{remark}
\newtheorem{rem}[thm]{Remark}
\newtheorem*{rem*}{Remark}
\DeclareMathOperator{\Hom}{Hom}
\newcommand{\kringel}{\mathbin{\raise1pt\hbox{$\scriptstyle\circ$}}}
\newcommand{\pkt}{\mathbin{\raise0pt\hbox{$\scriptstyle\bullet$}}}
\newcommand{\Der}{{\rm Der}}
\newcommand{\La}{\mathfrak{a}}
\newcommand{\Lb}{\mathfrak{b}}
\newcommand{\Lg}{\mathfrak{g}}
\newcommand{\Lh}{\mathfrak{h}}
\newcommand{\abs}[1]{\lvert#1\rvert}
\newcommand{\al}{\alpha}
\newcommand{\be}{\beta}
\newcommand{\ra}{\rightarrow}
\renewcommand{\phi}{\varphi}
\begin{document}


\title[Index]{The index of nilpotent Lie algebras}
\author[D. Burde]{Dietrich Burde}
\author[K. Dekimpe]{Karel Dekimpe}
\address{Fakult\"at f\"ur Mathematik\\
Universit\"at Wien\\
Oskar-Morgenstern-Platz 1\\
1090 Wien \\
Austria}
\email{dietrich.burde@univie.ac.at}
\address{Katholieke Universiteit Leuven Campus Kulak Kortrijk\\
Etienne Sabbelaan 53\\
8500 Kortrijk\\
Belgium}
\email{karel.dekimpe@kuleuven.be}
\date{\today}

\subjclass[2000]{Primary 17B30, 17B08}
\keywords{Index of a Lie algebra, free-nilpotent Lie algebra, filiform nilpotent Lie algebra}

\begin{abstract}
The index of a Lie algebra is an important invariant which arises in several areas, e.g. in the 
study of coadjoint orbits for a Lie group, in invariant theory and in representation theory. 
We study the index for several classes of nilpotent Lie algebras.
In particular, we give explicit formulas for free-nilpotent Lie algebras of nilpotency class 
two and three, or of solvability class two, for graph Lie algebras, and for filiform nilpotent 
Lie algebras.
\end{abstract}

\maketitle

\section{Introduction}

The index of a Lie algebra was introduced by Dixmier \cite{DIX} because of its importance in representation theory.
For a connected Lie group $G$ with associated Lie algebra $\Lg$ one considers the stabilizer $G(\ell)$ of the coadjoint action,
for $\ell\in \Lg^*$. Its Lie algebra is denoted by $\Lg(\ell)$, and the {\em index of $\Lg$} is defined by
\[
\chi(\Lg)=\inf \{ \dim \Lg(\ell)\mid \ell\in \Lg^*\}.
\]
The stabilizer arises in the study of irreducible unitary representations of nilpotent Lie groups, in Kirillov theory,
in invariant theory, and in several other areas. There are also connections to harmonic analysis, in the context of square-integrable
irreducible unitary representations of connected and simply connected nilpotent Lie groups. 
For references see \cite{BU79,COG}. \\[0.2cm]
However, the general definition of $\chi(\Lg)$ is given over an arbitrary field $K$ without referring to the coadjoint action of a Lie group,
by writing $\Lg(\ell)$ as the radical of the skew-symmetric bilinear form $B_{\ell}(x,y)=\ell([x,y])$. It can be generalized to
the index of a representation of a Lie algebra.  \\[0.2cm]
The index of Lie algebras has been studied by several authors in various contexts, see for example \cite{ADM,CHM,DEK,OOM2,PAN,TAY}
and the references cited therein. Despite of the results obtained, it still remains very difficult to
compute the index of an arbitrary Lie algebra. If $\Lg$ is reductive, then the index coincides with the rank \cite{TAY}. But for
solvable and nilpotent Lie algebras there are only a few explicit results known for the index. Furthermore, the index of a
Lie algebra is also related to another invariant of $\Lg$, namely to 
$\al(\Lg)$, the maximal dimension of an abelian subalgebra of $\Lg$.
This invariant has also been studied in the literature, see \cite{BU39}, and the 
references given therein. \\[0.2cm]
In this paper we prove explicit formulas for the index of free-nilpotent Lie algebras $F_{g,c}$ of nilpotency class $c\ge 2$
with $g$ generators, where $c=2$ and $c=3$, for the index of two-step nilpotent graph Lie algebras, for the index of metabelian free-nilpotent
Lie algebras $M_{g,c}$, and for filiform nilpotent Lie algebras.

\section{Preliminaries}

Let $\Lg$ be a finite-dimensional Lie algebra over a field $K$. We will always assume that $K$ has characteristic zero, if not
stated otherwise. Denote by $Z(\Lg)$ the center of $\Lg$,
and by $\Lg^*$ the vector space $\Hom(\Lg,K)$. Each $\ell\in \Lg^*$ determines a skew-symmetric bilinear form
\[
B_{\ell}\colon \Lg\times \Lg \ra K, \; (x,y)\mapsto \ell([x,y]).
\]  
The radical of $B_{\ell}$ is given by the subalgebra
\[
\Lg(\ell)=\{y\in \Lg\mid \ell ([x,y])=0 \; \forall\, x\in \Lg\}
\]  
of $\Lg$. Obviously $\Lg(\ell)$ contains the ideal $Z(\Lg)$ of $\Lg$, but need not be an ideal itself in general, see
\cite{COG}, Example $1.3.10$. Furthermore, $\Lg(\ell)$ has even codimension in $\Lg$, see Lemma $1.3.2$ in \cite{COG}.

\begin{defi}
The {\em index} of a Lie algebra $\Lg$ over a field $K$ is defined by
\[
\chi(\Lg)=\inf \{ \dim \Lg(\ell) \mid \ell\in \Lg^*\}.
\]  
\end{defi}  

The definition goes back to Dixmier, see \cite{DIX}, 11.1.6. There is a large literature on the index.
In the book  \cite{TAY}, by Tauvel and Yu, several results are listed and references are given.
For reductive Lie algebras, the index equals the rank. For an arbitrary solvable or nilpotent Lie algebra computing the index
is a difficult problem. The index is also related to the invariants $\alpha(\Lg)$ and $\be(\Lg)$ of a Lie algebra $\Lg$.
They are defined as follows.

\begin{defi}
Let $\Lg$ be a Lie algebra over a field $K$. The maximal dimension of an {\em abelian subalgebra} of $\Lg$ is
denoted by $\alpha(\Lg)$. The maximal dimension of an {\em abelian ideal} of $\Lg$ is denoted by $\be(\Lg)$. 
\end{defi}  

Let us summarize a few properties of the invariants $\al(\Lg)$ and $\chi(\Lg)$.

\begin{prop}\label{2.3}
Let $\Lg$ be a Lie algebra over an algebraically closed field $K$ of characteristic zero. Then 
$\al(\Lg)$ and $\chi(\Lg)$ satisfy the following properties.
\begin{itemize}
\item[(1)] $\dim Z(\Lg) \le \chi(\Lg)\le \dim \Lg$.
\item[(2)] $\al(\Lg)=\be(\Lg)$ for every solvable Lie algebra over $K$.
\item[(3)] $\chi(\Lg)\le \al(\Lg)$.
\item[(4)] $\chi(\Lg)\ge 2\al(\Lg)-\dim \Lg$.
\item[(5)] If $\Lg$ degenerates to $\Lh$, then $\al(\Lg)\le \al(\Lh)$ and $\chi(\Lg)\le \chi(\Lh)$.
\item[(6)] Let $\{x_1,\ldots ,x_n\}$ be a basis of $\Lg$. Then the index of $\Lg$ is given by
\[
n - {\rm rank}_R M(\Lg),
\]
where $M(\Lg)=([x_i,x_j])\in M_n(R)$, and $R$ is the quotient field of $K[x_1,\ldots ,x_n]$.
\end{itemize}
\end{prop}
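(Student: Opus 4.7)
Items (1), (6) and (4) are the easy bookkeeping parts. For (1), note that $Z(\Lg)\subseteq \Lg(\ell)$ for every $\ell\in \Lg^*$, since $[z,\Lg]=0$ forces $\ell([z,\cdot])=0$; the upper bound is tautological. For (6), write $\ell(x_k)=\ell_k$ and unfold $y\in \Lg(\ell)$ into a homogeneous linear system in the coordinates of $y$ whose coefficient matrix is $(\ell([x_i,x_j]))_{i,j}=\bigl(\sum_k c_{ij}^k\ell_k\bigr)_{i,j}$. Treating the $\ell_k$ as indeterminates, this matrix is exactly $M(\Lg)\in M_n(R)$. The function $\ell\mapsto \dim\Lg(\ell)$ is lower semi-continuous and attains its minimum on a nonempty Zariski-open subset of $\Lg^*$ (we use that $K$ is algebraically closed of characteristic zero), so $\chi(\Lg)$ equals the generic corank of $M(\Lg)$, namely $n-\mathrm{rank}_R M(\Lg)$. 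Item (4) now drops out: choosing a basis $x_1,\dots,x_n$ with $x_1,\dots,x_a$ a basis of a maximal abelian subalgebra $\La$ of dimension $a=\al(\Lg)$ forces the top-left $a\times a$ block of $M(\Lg)$ to vanish, and any skew-symmetric matrix with such a zero block has rank at most $2(n-a)$, giving $\chi(\Lg)\ge 2\al(\Lg)-n$.

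The main obstacle is (3). The natural isotropic-subspace argument (the image of $\La$ in $\Lg/\Lg(\ell)$ is isotropic for the induced nondegenerate skew form and hence of dimension at most $(n-\chi(\Lg))/2$) only yields (4), not (3). My plan for (3) is instead to exhibit a single $\ell$ with $\dim\Lg(\ell)\le \al(\Lg)$ by showing that for a Zariski-generic $\ell\in \Lg^*$ the stabilizer $\Lg(\ell)$ is itself an abelian subalgebra, whence $\chi(\Lg)=\dim\Lg(\ell)\le \al(\Lg)$. The commutativity of the generic coadjoint stabilizer is the technical heart of the statement; it relies essentially on algebraic closedness of $K$, and I would invoke the classical result (see Tauvel--Yu \cite{TAY}) rather than reprove it.

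Item (2) is settled by quoting Mal'cev's theorem: every solvable Lie algebra over an algebraically closed field of characteristic zero contains an abelian ideal of dimension $\al(\Lg)$, so $\be(\Lg)=\al(\Lg)$. Finally, (5) follows from (6) combined with semi-continuity. The rank of $M(\cdot)$ is upper semi-continuous in the structure constants of the Lie bracket, so $\chi$ is lower semi-continuous under specialization of the Lie law and cannot decrease when $\Lg$ degenerates to $\Lh$, giving $\chi(\Lg)\le \chi(\Lh)$. For $\al$, the set of pairs $(U,\mu)$ with $U$ an $a$-dimensional subspace on which the bracket $\mu$ vanishes is a closed subscheme of the relative Grassmannian over the variety of Lie laws; hence the condition $\al\ge a$ is closed and passes to limits, yielding $\al(\Lg)\le \al(\Lh)$.
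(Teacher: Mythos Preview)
Your proposal is essentially correct and in fact more substantive than the paper's own proof, which is largely a list of citations: (2) is referred to \cite{BU39}, (3)--(4) to \cite{OOM2,OOM1}, (5) to \cite{NEP} and \cite{PAN}, and (6) to \cite{PAN}. Only (1) is argued directly, and your argument for it coincides with the paper's. Where you go further is in actually sketching the mechanisms. Your derivation of (4) from (6) via the vanishing $a\times a$ block of $M(\Lg)$ and the elementary bound $\mathrm{rank}\le 2(n-a)$ for such skew matrices is a clean self-contained argument that the paper does not spell out. Your plan for (3) --- generic commutativity of $\Lg(\ell)$, hence $\chi(\Lg)=\dim\Lg(\ell)\le\al(\Lg)$ --- is exactly the content behind the cited result in \cite{OOM1}, and invoking it from \cite{TAY} is appropriate. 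Your treatment of (5) via closedness of the incidence variety in the relative Grassmannian (for $\al$) and semi-continuity of the generic rank of $M(\cdot)$ (for $\chi$) is the standard unpacking of what the paper cites.

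One terminological slip worth fixing: the map $\ell\mapsto\dim\Lg(\ell)$ is \emph{upper} semi-continuous (the sets $\{\ell:\dim\Lg(\ell)\ge k\}$ are Zariski-closed, since matrix rank is lower semi-continuous), not lower as you wrote; likewise, on the variety of Lie laws the index $\chi$ is upper semi-continuous (the paper says this explicitly, citing \cite{PAN}). Your conclusions --- that the minimum of $\dim\Lg(\ell)$ is attained on a nonempty open set, and that $\chi(\Lg)\le\chi(\Lh)$ under degeneration --- are unaffected, but the direction of semi-continuity should be corrected.
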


\begin{proof}
Clearly we have $\chi(\Lg)\le \dim \Lg$ because $\Lg(\ell)$ is a subalgebra of $\Lg$. 
Since $Z(\Lg)\subseteq \Lg(\ell)$ for all $\ell\in \Lg^*$, property $(1)$ is satisfied. Note that it holds over an
arbitrary field. For property $(2)$ see Proposition $2.6$ in \cite{BU39}. There are counterexamples for fields, which are
not algebraically closed. Properties $(3)$ and $(4)$ are stated in \cite{OOM2}, formula $(3)$ in section $1.2$ with a
reference to \cite{OOM1}, Theorem $14$ for a proof. 
For property $(5)$ for $\al(\Lg)$ see \cite{NEP}, Theorem $1$, part $2$. The index is an upper semi-continuous function,
see \cite{PAN}, section $1$. Hence we have $\chi(\Lg)\le \chi(\Lh)$ for $\Lg\ra_{deg}\Lh$.
For property $(6)$ also see  \cite{PAN}, section $1$.
\end{proof}  

For a Lie algebra $\Lg$, denote by $\Lg^1=\Lg$ and $\Lg^{k+1}=[\Lg,\Lg^k]$ for $k\ge 1$ the terms of the lower central series.

\begin{defi}
Let $F_g$ denote the free Lie algebra on $g$ generators. Then 
\[
F_{g,c}:=F_g/F_g^{c+1}
\]
is called the {\em free-nilpotent Lie algebra} with $g\ge 1$ generators and nilpotency class $c\ge 1$.
\end{defi}  

If $g=1$, then $F_{g,c}$ is $1$-dimensional and abelian. Hence we will assume that $g\ge 2$.
Also, for $c=1$, the Lie algebra $F_{g,c}$ is abelian of dimension $g$. So let $c\ge 2$.
The dimension of $F_{g,c}$ is given by the {\em Witt formula}.

\begin{prop}
The dimension of  $F_{g,c}$ and its center are given by
\begin{align*}
\dim (F_{g,c}) & =\sum_{m=1}^c \frac{1}{m}\sum_{d\mid m}\mu(d)g^{\frac{m}{d}},\\[0.1cm]
\dim Z(F_{g,c})& =\frac{1}{c}\sum_{d\mid c} \mu(d)g^{\frac{c}{d}}.
\end{align*}

\end{prop}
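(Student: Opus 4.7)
The plan is to reduce both statements to the classical Witt formula for the dimensions of the homogeneous components $L_m = F_g^m/F_g^{m+1}$ of the free Lie algebra $F_g$, and then identify the center of $F_{g,c}$ with the top component $L_c$.

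First I would establish the formula $\dim L_m = \frac{1}{m}\sum_{d\mid m}\mu(d)g^{m/d}$ via the standard Hilbert series argument. Let $V=K^g$, so the tensor algebra $T(V)$ has Hilbert series $\sum_{n\ge 0} g^n t^n = (1-gt)^{-1}$. By the Poincaré–Birkhoff–Witt theorem, $U(F_g)\cong T(V)$, and $U(F_g)$ is isomorphic as a graded vector space to the symmetric algebra $S(F_g)=\bigotimes_{m\ge 1}S(L_m)$, whence
\[
\frac{1}{1-gt}=\prod_{m\ge 1}\frac{1}{(1-t^m)^{\dim L_m}}.
\]
Taking logarithms and comparing coefficients of $t^n$ yields $g^n=\sum_{m\mid n}m\dim L_m$. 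Möbius inversion then gives the Witt formula. Summing $\dim L_m$ from $m=1$ to $c$ (noting $F_{g,c}\cong\bigoplus_{m=1}^c L_m$ as graded vector spaces, by PBW applied to the nilpotent quotient) produces the first displayed formula.

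For the second formula, I would show that $Z(F_{g,c})=L_c$, whence $\dim Z(F_{g,c})=\dim L_c=\frac{1}{c}\sum_{d\mid c}\mu(d)g^{c/d}$. The inclusion $L_c\subseteq Z(F_{g,c})$ is immediate: $[F_g,F_g^c]=F_g^{c+1}=0$ in $F_{g,c}$, so the image of $F_g^c$ is central. For the reverse inclusion, take $\bar x\in Z(F_{g,c})$ and decompose $\bar x = \bar x_1+\cdots+\bar x_c$ with $\bar x_m\in L_m$. For each $m<c$, assume $\bar x_m\neq 0$; I need a generator $e_i$ with $[\bar x_m,e_i]\neq 0$ in $F_{g,c}$. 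Since $[\bar x_m,e_i]\in L_{m+1}$ with $m+1\le c$, this bracket is nonzero in $F_{g,c}$ iff it is nonzero already in $F_g$. Hence the claim reduces to the fact that the free Lie algebra $F_g$ on $g\ge 2$ generators has trivial center, which may be cited from Bourbaki or Reutenauer, but also follows quickly from PBW: a central element $x\in F_g$ commutes with every generator in $U(F_g)=T(V)$, and the centralizer of $V$ in the free associative algebra $T(V)$ on $\ge 2$ generators is just $K$.

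The main obstacle is not the Witt formula itself, which is standard, but the centrality claim: making sure the degree argument is clean requires that one knows $F_g$ is centerless (for $g\ge 2$) and that the quotient map $F_g\twoheadrightarrow F_{g,c}$ is injective on $L_{m+1}$ for $m+1\le c$, which is built into the PBW-based identification $F_{g,c}\cong\bigoplus_{m=1}^c L_m$. With this in hand, the two formulas follow without further computation.
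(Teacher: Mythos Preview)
Your argument is correct. The paper does not actually supply a proof of this proposition: it simply announces that ``the dimension of $F_{g,c}$ is given by the \emph{Witt formula}'' and states the two formulas without further justification. So there is nothing to compare against beyond noting that you have filled in what the paper leaves as background.

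Your derivation of the Witt formula via the PBW identity $U(F_g)\cong T(V)$ and the product expansion of the Hilbert series is the standard one, and the M\"obius inversion step is routine. For the center, your grading argument is sound once one remembers that $F_g$ itself is graded by word length, so that $F_{g,c}=\bigoplus_{m=1}^c L_m$ as a graded Lie algebra with $L_m$ an honest subspace; then $[\bar x,e_i]=0$ forces each homogeneous piece $[\bar x_m,e_i]\in L_{m+1}$ to vanish separately, and the conclusion $\bar x_m=0$ for $m<c$ follows from the centerlessness of $F_g$ for $g\ge 2$. The only cosmetic suggestion is to make this grading explicit up front, rather than phrasing it as ``the quotient map is injective on $L_{m+1}$'', since the latter formulation slightly obscures that you are using a direct-sum decomposition and not merely a filtration.
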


\section{The index of free-nilpotent Lie algebras}

We first consider the free-nilpotent Lie algebras $F_{g,2}$ of nilpotency class $c=2$, with
$g\ge 2$ generators. We have $\dim F_{g,2}=\binom{g}{2}+g=\binom{g+1}{2}$, and $Z(F_{g,2})=[F_{g,2},F_{g,2}]$ is of
dimension $\binom{g}{2}$.

\begin{prop}
We have $\al(F_{g,2})=\binom{g}{2}+1$ for all $g\ge 2$.
\end{prop}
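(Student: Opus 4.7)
The plan is to prove the two inequalities $\alpha(F_{g,2}) \ge \binom{g}{2}+1$ and $\alpha(F_{g,2}) \le \binom{g}{2}+1$ separately, with the lower bound being an easy construction and the upper bound coming from the fact that in a free $2$-step nilpotent Lie algebra, the bracket detects linear independence modulo the center.

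For the lower bound, let $e_1,\dots,e_g$ be free generators of $F_{g,2}$. I would take $\Lh = Ke_1 \oplus Z(F_{g,2})$. Since $Z(F_{g,2}) = [F_{g,2},F_{g,2}]$ commutes with all of $F_{g,2}$, and $e_1$ commutes with itself, $\Lh$ is abelian. Its dimension is $1 + \binom{g}{2}$, giving $\alpha(F_{g,2}) \ge \binom{g}{2}+1$.

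For the upper bound, the key observation is a bracket-computation. Using the basis $e_1,\dots,e_g$ of $F_{g,2}$ together with $\{[e_i,e_j] \mid i<j\}$ as a basis of the center, any $x,y \in F_{g,2}$ can be written as $x = \sum a_i e_i + z_x$ and $y = \sum b_i e_i + z_y$ with $z_x,z_y \in Z(F_{g,2})$. Then
\[
[x,y] = \sum_{i<j}(a_ib_j - a_jb_i)[e_i,e_j],
\]
so $[x,y]=0$ if and only if the vectors $(a_1,\dots,a_g)$ and $(b_1,\dots,b_g)$ are linearly dependent, i.e.\ if and only if the images $\bar x, \bar y$ of $x,y$ in the abelian quotient $F_{g,2}/Z(F_{g,2}) \cong K^g$ are linearly dependent.

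Given this, the upper bound follows quickly. Let $\Lh$ be any abelian subalgebra and let $\bar \Lh$ be its image under $\pi\colon F_{g,2}\to F_{g,2}/Z(F_{g,2})$. By the observation above, any two elements of $\Lh$ have proportional images in $K^g$, so $\dim \bar \Lh \le 1$. Since $\ker(\pi|_{\Lh}) = \Lh \cap Z(F_{g,2}) \subseteq Z(F_{g,2})$ has dimension at most $\binom{g}{2}$, we get
\[
\dim \Lh \;=\; \dim \bar \Lh + \dim(\Lh \cap Z(F_{g,2})) \;\le\; 1 + \binom{g}{2}.
\]

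There is no real obstacle here; the only nontrivial ingredient is the bracket formula, which holds because $\{[e_i,e_j] : i<j\}$ is a basis of the center of the free $2$-step nilpotent Lie algebra (no relations beyond antisymmetry).
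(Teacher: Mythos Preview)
Your proof is correct and follows essentially the same approach as the paper: both establish the lower bound via the abelian subalgebra $\langle e_1\rangle \oplus Z(F_{g,2})$, and both prove the upper bound by showing that two elements of $F_{g,2}$ which are linearly independent modulo the center cannot commute. The only cosmetic difference is that you verify this last fact via the explicit bracket formula in the fixed basis $e_1,\dots,e_g$ and then phrase the conclusion in terms of the quotient map $\pi$, whereas the paper extends $v_1,v_2$ to a basis modulo the center and argues by a dimension count on $[F_{g,2},F_{g,2}]$.
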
  

\begin{proof}
Let $\La$ be an abelian subalgebra of maximal dimension in $F_{g,2}$. Since $\La$ is self-centralizing, we have
$Z(F_{g,2})\subseteq Z_{F_{g,2}}(\La)=\La$, where $Z_{\Lg}(\La)=\{x\in \Lg\mid [x,\La]=0\}$ is the centralizer of $\La$ in $\Lg$.
For any element $v_1\in F_{g,2}\backslash Z(F_{g,2})$ the linear span  $\langle v_1,Z(F_{g,2})\rangle$ is an abelian
subalgebra of  $F_{g,2}$. However, if we now take any $v_2 \in F_{g,2}\backslash \langle v_1, Z(F_{g,2}) \rangle$, then $[v_1,v_2]\neq 0$. 
Indeed, suppose that $[v_1,v_2]=0$. We can pick $v_3, v_4, \ldots, v_g \in F_{g,2}$ in such a 
way that the elements 
\[
v_1 + Z(F_{g,2}), \;  v_2+ Z(F_{g,2}),  \ldots, v_g  + Z(F_{g,2}) 
\]
form a basis of $F_{g,2}/Z(F_{g,2})$. It follows that 
$[F_{g,2}, F_{g,2}]$ is spanned by the elements $[v_i,v_j]$ for  $1\le i<j\le g$. But as $[v_1,v_2]=0$, this would imply that 
the dimension of $[F_{g,2}, F_{g,2}]$ is at most $\binom{g}{2}-1$ which is a contradiction. Hence $[v_1,v_2]\neq 0$ which shows that 
the maximal dimension of an abelian subalgebra of $F_{g,2}$ is $\dim Z(F_{g,2})+1$.
\end{proof}  

It is also possible to give an explicit formula for the index of $F_{g,2}$.

\begin{prop}\label{3.2}
The index of the two-step nilpotent Lie algebra $F_{g,2}$ with $g\ge 2$ is given by
\[
\chi (F_{g,2})=\begin{cases} \binom{g}{2}, & \text{ if }  g\equiv 0 \bmod 2 \\[0.1cm]
\binom{g}{2}+1, & \text{ if } g\equiv 1 \bmod 2 \end{cases}
\]
\end{prop}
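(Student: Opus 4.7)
The plan is to exploit the sharp two-sided bound on $\chi(F_{g,2})$ coming from Proposition~\ref{2.3} together with a mod~$2$ constraint; the work has essentially been done in the previous proposition.

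First, I combine items~(1) and~(3) of Proposition~\ref{2.3}. Since $Z(F_{g,2})=[F_{g,2},F_{g,2}]$ has dimension $\binom{g}{2}$ (recorded at the start of this section) and the previous proposition gives $\al(F_{g,2})=\binom{g}{2}+1$, we obtain
\[
\binom{g}{2}=\dim Z(F_{g,2})\le \chi(F_{g,2})\le \al(F_{g,2})=\binom{g}{2}+1,
\]
so $\chi(F_{g,2})\in\bigl\{\binom{g}{2},\binom{g}{2}+1\bigr\}$.

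To decide between the two values, I invoke the parity statement recalled in Section~2: for every $\ell\in F_{g,2}^{*}$, the stabilizer $F_{g,2}(\ell)$ has even codimension in $F_{g,2}$, hence $\dim F_{g,2}-\chi(F_{g,2})$ is even. Since $\dim F_{g,2}=\binom{g+1}{2}$, the candidate codimensions are $\binom{g+1}{2}-\binom{g}{2}=g$ and $\binom{g+1}{2}-(\binom{g}{2}+1)=g-1$, which have opposite parities. When $g$ is even, only $g$ is even, so $\chi(F_{g,2})=\binom{g}{2}$; when $g$ is odd, only $g-1$ is even, so $\chi(F_{g,2})=\binom{g}{2}+1$. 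This yields the claimed formula.

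There is essentially no obstacle: once $\al(F_{g,2})$ is pinned down, the two-sided squeeze plus parity leaves exactly one value in each residue class of $g$. If one preferred a computational proof, one could instead apply Proposition~\ref{2.3}(6) with the basis $\{x_i\}_{i=1}^g\cup\{[x_i,x_j]\}_{i<j}$: the matrix $M(F_{g,2})$ is block diagonal with a generic skew-symmetric $g\times g$ block $A$ (whose super-diagonal entries are the $\binom{g}{2}$ algebraically independent bracket-variables) and a zero block of size $\binom{g}{2}$, and the well-known rank of a generic skew-symmetric matrix ($g$ if $g$ is even, $g-1$ if $g$ is odd) gives the same answer upon subtracting from $\dim F_{g,2}$.
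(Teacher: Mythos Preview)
Your proof is correct, but it follows a genuinely different route from the paper's. The paper proves the proposition directly via item~(6) of Proposition~\ref{2.3}: it writes down the matrix $M(F_{g,2})$ in the natural basis, observes that only the top-left $g\times g$ block is nonzero and is a generic skew-symmetric matrix, and invokes the standard fact that such a matrix has rank $g$ or $g-1$ according to the parity of $g$. Your alternative paragraph at the end is exactly this argument.

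Your main argument is more conceptual: you squeeze $\chi(F_{g,2})$ between $\dim Z(F_{g,2})=\binom{g}{2}$ and $\al(F_{g,2})=\binom{g}{2}+1$, then let the even-codimension constraint (recorded in Section~2) select the correct value. This is shorter and nicely recycles the computation of $\al(F_{g,2})$ from the preceding proposition, whereas the paper's proof is self-contained and does not rely on that result. One small caveat: item~(3) in Proposition~\ref{2.3} is stated there under the hypothesis that $K$ is algebraically closed, so strictly speaking you should note that passing to the algebraic closure is harmless here (the rank in item~(6) is insensitive to extending the base field, so $\chi(F_{g,2})$ does not change). With that remark your argument is complete.
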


\begin{proof}
Let $\Lg=F_{g,2}$. The rank of the matrix $M(\Lg)$ in $(6)$ of Proposition $\ref{2.3}$ is the rank of a general skew-symmetric matrix of
size $g$. Indeed, Consider a basis of $\Lg$ consisting of the generators $x_1,\ldots ,x_g$ supplemented with a basis of the center. Then the 
last $\binom{g}{2}$ rows and columns of $M(\Lg)$ are $0$ while for the $g \times g$ block in the upper left corner, the  entries of the upper-diagonal part
are exactly  the $\binom{g}{2}$ Lie brackets $[x_i,x_j]$ for $1\le i<j\le g$ 
(and we have $-[x_i,x_j]$ in the lower-diagonal part).
It is well known that such a matrix has full rank $g$, if $g$ is even, and rank $g-1$ if $g$ is odd. Indeed, the rank cannot be smaller
than the rank for any specialization.
So consider the skew-symmetric matrix with entries $1$ instead of $[x_i,x_j]$ for $i<j$, which has rank $g$ for even $g$, and
rank $g-1$ for odd $g$. By Proposition $\ref{2.3}$, part $(6)$ the index is given by
\[
\chi (F_{g,2})=\begin{cases} \binom{g}{2}+ g - g = \binom{g}{2}  & \text{ if }  g\equiv 0 \bmod 2 \\[0.1cm]
\binom{g}{2}+ g - (g-1) = \binom{g}{2}+1 & \text{ if } g\equiv 1 \bmod 2 \end{cases}
\]
\end{proof}

For example, the Heisenberg Lie algebra $F_{2,2}$ has index $1$. The results show that the above inequalities
$(3)$ and $(4)$ can be strict. In fact, for $g$ even we have $\chi(F_{g,2})< \al(F_{g,2})$, and for $g\ge 3$ odd we have
$\chi(F_{g,2})> 2\al(F_{g,2})-\dim F_{g,2}$, but $\chi(F_{g,2})=\al(F_{g,2})$. \\[0.2cm]
Next we want to compute the index of the $3$-step nilpotent algebras $F_{g,3}$. We have
\begin{align*}
\dim F_{g,3} & = \frac{g(g+1)(2g+1)}{6},\\[0.1cm] 
\dim Z(F_{g,3}) & = \frac{g^3-g}{3}.
\end{align*}

With respect to the index the case $g=2$ is different from the case $g\ge 3$. Let $x_1,x_2$ be the generators of $F_{2,3}$, and
$x_3=[x_1,x_2], \, x_4=[x_1,[x_1,x_2]]=[x_1,x_3], \, x_5=[x_2,[x_1,x_2]]=[x_2,x_3]$. 

\begin{ex}\label{3.3}
Let $\Lg=F_{2,3}$. Then the matrix
\[
M(\Lg)=\begin{pmatrix} 0 & x_3 & x_4 & 0 & 0 \\ -x_3 & 0 & x_5 & 0 & 0 \\ -x_4 & -x_5 & 0 & 0 & 0 \\ 0 & 0 & 0 & 0 & 0 \\
0 & 0 & 0 & 0 & 0 \\ \end{pmatrix}
\]
has rank $2$, so that $\chi(\Lg)=3$. Moreover we have $\al(\Lg)=3$.
\end{ex}

Indeed, this follows from  Proposition $\ref{2.3}$, part $(6)$. Furthermore, $\La=\langle x_3,x_4,x_5\rangle $ is an abelian
subalgebra of maximal dimension. \\[0.2cm]
The general result for the index of  $F_{g,3}$ with $g\ge 3$ is as follows.

\begin{thm}\label{3.4}
The index of the free-nilpotent Lie algebra $F_{g,3}$ for $g\ge 3$ is given by
\begin{align*}
\chi (F_{g,3}) & = \dim Z(F_{g,3}) + \binom{g}{2}-g \\[0.1cm]
               & = \dim F_{3,g}-2g \\[0.1cm] 
               & = \frac{2g^3+3g^2-11g}{6}.   
\end{align*}
\end{thm}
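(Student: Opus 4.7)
My plan is to apply Proposition~\ref{2.3}(6), which gives $\chi(F_{g,3}) = \dim F_{g,3} - {\rm rank}_R M$ where $M = M(F_{g,3})$. Since $\dim F_{g,3} - 2g = (2g^3 + 3g^2 - 11g)/6$, it suffices to show ${\rm rank}_R M = 2g$. I choose a basis of $F_{g,3}$ adapted to the grading $F_{g,3} = V_1 \oplus V_2 \oplus V_3$, with $V_1 = \langle e_1, \ldots, e_g\rangle$, the $e_{jk} = [e_j, e_k]$ (for $j<k$) forming a basis of $V_2 = [V_1,V_1]$, and any fixed basis of $V_3 = [V_1,V_2] = Z(F_{g,3})$. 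Because $V_3 = Z(F_{g,3})$ and $[V_2, V_2] \subseteq V_4 = 0$, the matrix $M$ takes the block form
\[
M = \begin{pmatrix} A & B & 0 \\ -B^T & 0 & 0 \\ 0 & 0 & 0 \end{pmatrix},
\]
where $A$ is the $g \times g$ skew block on $V_1 \times V_1$ (entries from $V_2$) and $B$ is the $g \times \binom{g}{2}$ block on $V_1 \times V_2$ (entries from $V_3$).

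The upper bound ${\rm rank}_R M \le 2g$ is the easy half: the $g$ columns coming from $V_1$ span a space of dimension at most $g$, and the $\binom{g}{2}$ columns coming from $V_2$ land in the top $g$ coordinates and contribute at most ${\rm rank}_R B \le g$ further dimensions. For the matching lower bound I plan to exhibit a single $\ell \in F_{g,3}^*$ with ${\rm rank}\,M(\ell) = 2g$. Take $\ell$ vanishing on $V_1 \oplus V_2$: then $A(\ell) = 0$, and the non-zero part of $M(\ell)$ is the block-antidiagonal matrix $\bigl(\begin{smallmatrix} 0 & B(\ell_3) \\ -B(\ell_3)^T & 0 \end{smallmatrix}\bigr)$, whose rank is $2\,{\rm rank}\,B(\ell_3)$ by a direct kernel computation. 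So the task reduces to choosing $\ell_3 \in V_3^*$ for which the $g \times \binom{g}{2}$ matrix $B(\ell_3) = (\ell_3([e_i, e_{jk}]))_{i,\, j<k}$ has full row rank $g$.

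The main obstacle is that the entries of $B(\ell_3)$ are constrained by Jacobi: the scalars $\ell_3([e_i, e_{jk}])$ must satisfy $\ell_3([e_i,[e_j,e_k]]) - \ell_3([e_j,[e_i,e_k]]) + \ell_3([e_k,[e_i,e_j]]) = 0$ for every triple $i < j < k$. I propose the explicit choice $\ell_3([e_i, e_{jk}]) = \delta_{i,j} + \delta_{i,k}$ for $j < k$. For each $i < j < k$ all three terms in the Jacobi relation involve pairwise distinct indices, so every Kronecker symbol vanishes and the relation is automatic. With this $\ell_3$, the entry of $B(\ell_3)$ at position $(i,(j,k))$ equals $1$ precisely when $i \in \{j,k\}$. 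A row dependence $\sum_i c_i R_i = 0$ then reads $c_j + c_k = 0$ for every pair $j \neq k$; choosing three distinct indices (available since $g \ge 3$) gives $c_1 + c_2 = c_1 + c_3 = c_2 + c_3 = 0$, forcing $c_1 = c_2 = c_3 = 0$, and applying the same argument to every triple yields $c_i = 0$ for all $i$. Hence ${\rm rank}\,B(\ell_3) = g$, so ${\rm rank}\,M(\ell) = 2g = {\rm rank}_R M$, and $\chi(F_{g,3}) = \dim F_{g,3} - 2g = (2g^3 + 3g^2 - 11g)/6$.
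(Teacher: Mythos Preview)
Your argument is correct and follows the same overall strategy as the paper: both obtain the upper bound ${\rm rank}_R M \le 2g$ from the block structure of $M$ with respect to the grading $V_1\oplus V_2\oplus V_3$, and both establish the lower bound by exhibiting a functional $\ell$ supported on $V_3^*$ for which the $g\times\binom{g}{2}$ matrix $B(\ell)=(\ell([e_i,e_{jk}]))$ has full row rank $g$.

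The only real difference lies in the witness. The paper takes $\ell(x_{ijk})=i+j+k$ on its Hall-type basis and then verifies rank $g$ in two stages: first a $3\times 3$ determinant together with an induction to force all the $a_k$ to vanish, then a block-triangular analysis of the equations in the $b_{k\ell}$. Your choice $\ell_3([e_i,e_{jk}])=\delta_{i,j}+\delta_{i,k}$ is simpler: the Jacobi compatibility check is immediate (all Kronecker symbols vanish for distinct triples), and the row-independence of $B(\ell_3)$ reduces to the elementary observation that $c_j+c_k=0$ for all $j\neq k$ forces $c=0$ once $g\ge 3$. Your framing via the block-antidiagonal form of $M(\ell)$ also makes transparent why a single rank computation on $B(\ell_3)$ handles both the $V_1$- and $V_2$-directions simultaneously, whereas the paper treats these as two separate systems. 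Either witness works, but yours is shorter.
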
  

\begin{proof}
Let $x_1,x_2,\ldots ,x_g$ be the generators of $F_{g,3}$, and let
\begin{align*}  
x_{ij} & = [x_i,x_j], \hspace*{0.95cm} 1\le i<j\le g, \\
x_{ijk} & = [x_i,[x_j,x_k]], \; 1\le j<k\le g,\, 1\le i\le k\le g.
\end{align*}
Then these elements $x_i,x_{ij},x_{ijk}$ form a basis of $F_{g,3}$. Note that if $k<i$ and $j<k$, then
\begin{align*} 
[x_i,[x_j,x_k]] 
       & = -[x_j,[x_k,x_i]]+[x_k,[x_j,x_i]] \\
       & = -x_{jki}+x_{kji}
\end{align*}  
in terms of the basis elements.
Now let $\ell\in \Lg^*$ and write $x$ in $F_{g,3}$ as
\[
x=\sum_{1\le k \le g}a_kx_k+\sum_{1\le k<\ell\le g}b_{k\ell}x_{k\ell}+z,
\]
where $z\in Z(F_{g,3})=\langle x_{ijk} \mid 1\le j<k\le g,\, 1\le i\le k\le g\rangle$. We want to estimate the dimension
of the subalgebra $\Lg(\ell)$. We have $x\in \Lg(\ell)$ if and only if
\begin{align*}
\ell([x_i,x]) & = 0 \text{ for all } 1\le i \le g, \\
\ell([[x_i,x_j],x]) & = 0 \text{ for all } 1\le i<j\le g.
\end{align*} 
Note that the central element $z$ here plays no role, and can be chosen arbitrarily. These conditions are equivalent to a
system of equations 
\begin{align*}
0 & = \sum_{1\le k\le g} a_k \ell([x_i,x_k]) + \sum_{1\le k<\ell\le g} b_{k\ell} \ell([x_i,x_{kl}), \quad  1\le i \le g\\
0 & = \sum_{1\le k\le g}a_k \ell([[x_i,x_j],x_k]), \quad 1\le i<j\le g
\end{align*}
in the $g$ variables $a_k$ and $\binom{g}{2}$ variables $b_{k\ell}$. The variables $b_{k\ell}$ only appear in the first set
of equations, which are $g$ equations. So there are at least  $\binom{g}{2}-g$ degrees of freedom when determining
the  variables $b_{k\ell}$. Together with the fact that $z\in Z(F_{g,3})$ can be chosen freely, this shows that
\[
\dim \Lg(\ell)\ge \dim Z(F_{3,g})+ \binom{g}{2}-g
\]
for all  $\ell\in \Lg^*$. It remains to show that there exists some $\ell\in \Lg^*$, for which we have equality above.
Denote by $x_{ijk}^*$ the dual linear form in $\Lg^*$ corresponding to the basis vector $x_{ijk}$.
Then define an $\ell\in \Lg^*$ as follows:
\[
\ell=\sum_{\substack{1\le j<k\le g \\ 1\le i\le k\le g}}(i+j+k)x_{ijk}^*.
\]  
For $i>k$ and $1\le j<k\le g$ we have
\begin{align*}
\ell([x_i,[x_j,x_k]]) & = \ell(-x_{jki})+\ell(x_{kji}) \\
                      & = -(j+k+i)+(k+j+i) \\
                      & = 0.
\end{align*}  
As shown above, $x\in \Lg(\ell)$ is equivalent to two sets of linear equations. For example, we have
$\ell([x_1,x_2],x)=0$, which is equivalent to
\begin{align*}
0 & = a_1\ell([[x_1,x_2],x_1])+a_2\ell([[x_1,x_2],x_2])+\ldots + a_g\ell([[x_1,x_2],x_g]) \\
\Leftrightarrow 0 & = a_1\ell([x_1,[x_1,x_2]])+a_2\ell([x_2,[x_1,x_2]]) \\
\Leftrightarrow 0 & = 4a_1+5a_2,
\end{align*}  
where we have used that $\ell([[x_1,x_2],x_i])=0$ for all $i>2$, see above. Similarly, $\ell([x_2,x_3],x)=0$ and
$\ell([x_1,x_3],x)=0$ each yield a linear equation in the $a_k$, so that we have the system
\begin{align*}
0 & = 4a_1+5a_2, \\
0 & = 5a_1+6a_2+7a_3,\\
0 & = 6a_1+7a_2+8a_3.
\end{align*}  
The determinant of the corresponding matrix is equal to $6$, so that we have $a_1=a_2=a_3=0$. By induction it follows
that $a_i=0$ for all $1\le i\le g$. For this, assume that $a_1=\cdots =a_{k-1}=0$ for some $4\le k \le g$. Then we must have
$\ell([[x_1,x_k],x])=0$, which means
\[
(k+2)a_1+(k+3)a_3+\cdots +\cdots +(2k)a_{k-1}+ (2k+1)a_k=0.
\]  
Hence we have $a_k=0$. So we are left with the system of equations
\[
0 = \sum_{1\le k<\ell\le g} b_{k\ell} \ell([x_i,x_{kl}]), \quad  1\le i \le g.
\]
Let us order the variables as follows. We start with $b_{12}$, then $b_{13},b_{23}$, then $b_{14},b_{24},b_{34}$ and so on.
The first three equations then are given by
\begin{align*}
0 & = 4b_{12}+5b_{13}+6b_{23}+\cdots , \\
0 & = 5b_{12}+6b_{13}+7b_{23}+\cdots , \\
0 & = 7b_{13}+8b_{23}+\cdots . \\
\end{align*}  
The other equations do not involve the variables $b_{12}, b_{13},b_{23}$ anymore. All remaining equations are of the form
$(2k+1)b_{1k}+ \cdots $ for $k\ge 4$, so that the matrix corresponding to all $g$ equations 
has block form. The first block is the above $3\times 3$-block, and the second one is the $(g-3)\times (g-3)$-block, which is
an upper-triangular matrix, with nonzero entries on the diagonal. Since the block of size $3$ has again 
determinant $6$, so rank $3$, it is clear that the matrix has rank $g$. Hence the system of the equations 
in the variables $b_{k\ell}$ has a space of solutions of dimension $\binom{g}{2}-g$, so that
\[
\dim \Lg(\ell)= \dim Z(F_{3,g})+ \binom{g}{2}-g
\]  
for the $\ell$ defined above. This finishes the proof.
\end{proof}

\begin{cor}
Let $K$ be an algebraically closed field of characteristic zero and $g\ge 3$. Then we have
\begin{align*}
\al(F_{g,3}) & = \dim Z(F_{g,3})+\binom{g}{2} \\[0.1cm]
            & = \chi(F_{g,3})+g, \\[0.1cm] 
             & = \frac{2g^3+3g^2-5g}{6}.
\end{align*}  
\end{cor}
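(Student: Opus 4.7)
The plan is to prove the two inequalities $\al(F_{g,3}) \ge \binom{g}{2} + \dim Z(F_{g,3})$ and $\al(F_{g,3}) \le \binom{g}{2} + \dim Z(F_{g,3})$ separately, and then to verify that this common value agrees with $\chi(F_{g,3})+g$ and with the closed formula $\frac{2g^3+3g^2-5g}{6}$.

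For the lower bound I would exhibit a concrete abelian subalgebra of the required dimension. The natural candidate is $\La := [F_{g,3},F_{g,3}] = F_{g,3}^2$. Since $F_{g,3}$ has nilpotency class $3$, the inclusion $[\Lg^i,\Lg^j]\subseteq \Lg^{i+j}$ gives $[F_{g,3}^2,F_{g,3}^2]\subseteq F_{g,3}^4 = 0$, so $\La$ is an abelian ideal of $F_{g,3}$. Using the basis $\{x_{ij}\}\cup\{x_{ijk}\}$ from the proof of Theorem~\ref{3.4} (or the Witt formula for the dimensions of the graded pieces), one sees directly that $\dim \La = \binom{g}{2} + \dim Z(F_{g,3})$, which forces $\al(F_{g,3}) \ge \binom{g}{2}+\dim Z(F_{g,3})$.

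For the matching upper bound I would invoke part $(4)$ of Proposition~\ref{2.3}, which rearranges to $\al(\Lg) \le \tfrac{1}{2}\bigl(\chi(\Lg)+\dim \Lg\bigr)$. Substituting the value $\chi(F_{g,3}) = \frac{2g^3+3g^2-11g}{6}$ supplied by Theorem~\ref{3.4} together with $\dim F_{g,3} = \frac{g(g+1)(2g+1)}{6} = \frac{2g^3+3g^2+g}{6}$ produces
\[
\al(F_{g,3}) \le \frac{1}{2}\cdot\frac{4g^3+6g^2-10g}{6} = \frac{2g^3+3g^2-5g}{6},
\]
which is exactly the lower bound. The two bounds coincide, and a one-line arithmetic check confirms $\chi(F_{g,3})+g = \frac{2g^3+3g^2-11g}{6}+g = \frac{2g^3+3g^2-5g}{6}$, giving all three listed expressions.

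There is no real obstacle here: once $F_{g,3}^2$ is recognized as abelian (a direct consequence of class $3$ nilpotency), the result is a matching pair of easy bounds. The only delicate point is ensuring that the upper bound from Proposition~\ref{2.3}(4) is available, which requires the standing hypothesis that $K$ is algebraically closed of characteristic zero, precisely as assumed in the corollary.
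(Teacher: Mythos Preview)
Your proposal is correct and follows essentially the same route as the paper's proof: the lower bound comes from observing that $[F_{g,3},F_{g,3}]$ is abelian of the required dimension, and the upper bound is obtained by rearranging Proposition~\ref{2.3}(4) and substituting the value of $\chi(F_{g,3})$ from Theorem~\ref{3.4}. Your write-up is in fact slightly more explicit than the paper's in justifying why $F_{g,3}^2$ is abelian and in checking the arithmetic identities.
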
  

\begin{proof}
Let $\Lg=F_{g,3}$. Then $[\Lg,\Lg]$ is an abelian subalgebra of $\Lg$ of dimension $\dim Z(\Lg)+\binom{g}{2}$.
This implies
\[
\al(\Lg)\ge \dim Z(\Lg)+\binom{g}{2}.
\]  
On the other hand, $(4)$ of Proposition $\ref{2.3}$ implies that
\begin{align*}
\al(\Lg) & \le \frac{\chi(\Lg)+\dim \Lg}{2} \\[0.1cm]
            & = \frac{2g^3+3g^2-5g}{6} \\[0.1cm]
            & = \dim Z(\Lg)+\binom{g}{2}.
\end{align*}  
\end{proof}  

\begin{rem}
For  nilpotency class $c\ge 4$ it seems to be difficult to obtain similar formulas for the index of $F_{g,c}$.
We have computed the index for a few examples with $c=4$ and $c=5$, using $(6)$, i.e., by computing the nullspace and
the rank of $M(\Lg)$:
\vspace*{0.3cm}
\begin{center}
\begin{tabular}{c|c|c|c|c}
$\Lg$  & $F_{2,4}$ & $F_{3,4}$ & $F_{4,4}$ & $F_{3,5}$  \\
\hline
$\dim \Lg$      & $8$  & $32$ & $90$ & $80$  \\
$\dim Z(\Lg)$   & $3$  & $18$ & $60$ & $48$   \\
${\rm rank}\; M(\Lg)$  & $4$  & $8$ & $14$ & $12$   \\
$\chi(\Lg)$    & $4$  & $24$ & $76$ & $68$ \\
\end{tabular}
\end{center}
\end{rem}  

However, we are able to prove a general formula for the index of the metabelian quotients of the
Lie algebras $F_{g,c}$, see section $5$.

\section{The index of graph Lie algebras}

Let $K$ be a field, and $G(V,E)$ be a finite simple graph with vertex set $V$ and edge set $E$.
Associate to $G(V,E)$ a $2$-step nilpotent Lie algebra $\Lg$ over $K$ as follows. The underlying vector space for $\Lg$
is $U\oplus W$, where $U$ is the $K$-vector space with basis $V$, and $W$ is the subspace of $\Lambda^2(U)$ which is spanned
by the elements
\[
\{ v_1\wedge v_2 \mid \{v_1,v_2\}\in E\}.
\]  
Then $\dim \Lg=\abs{V}+\abs{E}$, and the Lie brackets for $\Lg$ are given by
\[
[v_1,v_2] =\begin{cases} v_1\wedge v_2, & \text{ if }  \{v_1,v_2\}\in E,   \\[0.1cm]
0, & \text{ if }  \{v_1,v_2\}\not\in E.  \end{cases}
\]
for $v_1,v_2\in V$, where we require that $W\subseteq Z(\Lg)$. Note that we have $[\Lg,[\Lg,\Lg]]=0$.

\begin{ex}
Let $G(V,E)$ be the complete graph on $V$, with $g=\abs{V}\ge 2$, i.e., such that every pair of distinct vertices
is connected by a unique edge. Then the associated Lie algebra equals $F_{g,2}$.
\end{ex}  

\begin{defi}
A {\em matching} of a simple graph $G(V,E)$ is a subset $S\subseteq E$ such that each vertex of $V$ belongs to at most one
element of $S$. A {\em maximum match} of $G(V,E)$ is a matching with a maximum number of edges.
\end{defi}  

\begin{defi}
The {\em matching number} of a simple graph $G(V,E)$ is the number of edges in a maximum match of
$G(V,E)$. It is denoted by $\nu(G(V,E))$.
\end{defi} 

We have the following result on the index of a graph Lie algebra.

\begin{prop}\label{4.4}
Let $\Lg$ be the $2$-step nilpotent Lie algebra over a field $K$ associated to a finite simple graph $G(V,E)$. Then
the index of $\Lg$ is given by
\begin{align*}
\chi(\Lg) & = \dim \Lg-2\nu(G(V,E)) \\[0.1cm]
          & = \abs{V}+\abs{E}-2\nu(G(V,E)).
\end{align*}
\end{prop}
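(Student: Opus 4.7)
The plan is to apply part $(6)$ of Proposition \ref{2.3}. Order a basis of $\Lg$ by first listing the vertices $v_1,\ldots,v_g$ (with $g=\abs{V}$) and then the wedge-basis of $W$. Since $W\subseteq Z(\Lg)$, every Lie bracket involving a wedge element vanishes, so
\[
M(\Lg)=\begin{pmatrix} A & 0 \\ 0 & 0 \end{pmatrix},
\]
where $A$ is the $g\times g$ skew-symmetric matrix whose $(i,j)$-entry for $i<j$ equals $v_i\wedge v_j$ when $\{v_i,v_j\}\in E$ and $0$ otherwise. Since the wedges $v_i\wedge v_j$ for edges of $G$ are distinct basis vectors of $\Lg$, they are algebraically independent generators of the polynomial ring inside $R$, so $A$ is (up to signs) the Tutte matrix of $G(V,E)$. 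Part $(6)$ therefore reduces the claim to the identity ${\rm rank}_R A=2\nu(G(V,E))$.

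For the lower bound, fix a maximum matching $\{e_1,\ldots,e_k\}$ with $k=\nu(G)$, say $e_t=\{v_{a_t},v_{b_t}\}$ with $a_t<b_t$, and let $S\subseteq V$ be the set of $2k$ matched vertices. The principal submatrix $A_S$ is skew-symmetric of size $2k$, and its Pfaffian, expanded as a polynomial in the edge indeterminates $v_i\wedge v_j$, contains the monomial $\prod_{t=1}^k (v_{a_t}\wedge v_{b_t})$ with coefficient $\pm 1$. No other summand of $\mathrm{Pf}(A_S)$ produces this monomial, since each summand corresponds to a distinct perfect matching of $S$ and the edge variables are independent. Hence $\det A_S=\mathrm{Pf}(A_S)^2\neq 0$ and ${\rm rank}_R A\ge 2k$.

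For the upper bound, recall the standard fact that the rank of a skew-symmetric matrix equals the largest even integer $2\ell$ for which some principal $2\ell\times 2\ell$ submatrix is nonsingular. If $A_T$ is such a nonsingular submatrix on $T\subseteq V$, then $\mathrm{Pf}(A_T)$ is a nonzero polynomial each of whose nonzero monomials records a perfect matching of $T$ using edges of $G$. In particular $G$ admits a matching of size $\ell$, so $\ell\le\nu(G)$ and ${\rm rank}_R A\le 2\nu(G)$. Combining both bounds with part $(6)$ gives $\chi(\Lg)=\abs{V}+\abs{E}-2\nu(G(V,E))$. The only substantive step is the matching-Pfaffian identification (Tutte's theorem on the rank of the generic skew-symmetric matrix of a graph); the remaining reductions are bookkeeping.
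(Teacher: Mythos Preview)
Your argument is correct and follows the same route as the paper: both order the basis with vertices first and central wedge-elements last, invoke part~$(6)$ of Proposition~\ref{2.3}, and reduce $M(\Lg)$ to the $g\times g$ skew-symmetric adjacency (Tutte) matrix of $G(V,E)$. The only difference is that the paper simply cites Lov\'asz~\cite{LOV} for the identity ${\rm rank}_R A=2\nu(G(V,E))$, whereas you supply a self-contained proof of this fact via Pfaffians---the lower bound from the monomial of a maximum matching and the upper bound from the principal-minor characterization of skew-symmetric rank. So your version is essentially the paper's proof with the cited lemma unpacked.
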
  

\begin{proof}
Let $V=\{x_1,x_2,\ldots ,x_n\}$. This is a basis for $U$. Then $W$ is spanned by all elements $[x_i,x_j]=x_i\wedge x_j$ with
$\{x_i,x_j\}\in E$. As a basis for $W$ choose all elements
\[
x_{ij}=[x_i,x_j]=x_i\wedge x_j, \, i<j,
\]
where $\{x_i,x_j \}\in E$. For $i>j$ and $\{x_i,x_j \}\in E$ we have $[x_i,x_j]=-x_{ij}$. So the elements $x_i$ for $1\le i\le n$
and the $x_{ij}$ for $i<j$ together form a basis of $\Lg$. Choose an ordering of this basis as follows:
\[
x_1,x_2,\ldots x_n,x_{n+1},x_{n+2},\ldots ,x_{n+m}
\]
where $m=\abs{E}$, $n=\abs{V}$, and each $x_{n+k}$ is one of the $x_{ij}$. We have
\[
\chi(\Lg)=\dim \Lg - {\rm rank}_R M(\Lg)
\]
by $(6)$ of Proposition $\ref{2.3}$, with the matrix $M(\Lg)=([x_i,x_j])\in M_{n+m}(R)$. This matrix has the last $m$ rows and columns
equal to zero, because $x_{n+k}\in Z(\Lg)$ for all $k$. So we may delete these rows and columns and pass to the matrix $M=(M_{ij})$, with
\[
M_{ij} =\begin{cases} x_{ij}, & \text{ if }  \{x_i,x_j\}\in E, \; i<j,  \\[0.1cm]
-x_{ij} &  \text{ if }  \{x_i,x_j\}\in E, \; i>j,  \\[0.1cm]
  0, & \text{ if }  \{x_i,x_j\}\not\in E.  \end{cases}
\]
This matrix is well known in graph theory and is called the {\em skew-symmetric adjacency matrix} of $G(V,E)$.
It is a result of Lov\'asz \cite{LOV}, that ${\rm rank}(M)=2\nu (G(V,E))$. This finishes the proof.
\end{proof}

\begin{rem}
Let $\{\{i_1,j_1\}\,\; \{i_2,j_2\},\; \ldots, \; \{i_m,j_m\} \}$ be a match in $G(V,E)$. 
Using the notation from the proof above (and assuming without loss of generality that $i_k<j_k$ for all $k$) we let $\ell = x_{i_1j_1}^\ast + x_{i_2j_2}^\ast + \cdots + x_{i_mj_m}^\ast$, where 
$x_{i_kj_k}^\ast$ is the dual linear map corresponding to the basis vector $x_{i_kj_k}$.
Then it is easy to see that 
$\dim \Lg(\ell)=\dim \Lg - 2 m $. So if the matching is maximal, this gives us a concrete $\ell$ with $\dim \Lg(\ell) = \chi(\Lg)$.
\end{rem}

\begin{rem}
The proposition above recovers Proposition $\ref{3.2}$. In fact, for a complete graph $G(V,E)$ with $\abs{V}=g$ and
$\abs{E}=\binom{g}{2}$, the index of $\Lg=F_{g,2}$ given by Proposition $\ref{4.4}$ is
$\chi(\Lg)=\dim \Lg-2\nu (G(V,E))$, where $2\nu (G(V,E))={\rm rank} M(\Lg)$. 
\end{rem}

\section{The index of metabelian free-nilpotent Lie algebras}

A Lie algebra $\Lg$ is called {\em metabelian}, if $[[\Lg,\Lg],[\Lg,\Lg]]=0$. It is also called $2$-step solvable in this case.
Denote by $M_{g,c}$ the free $c$-step nilpotent and metabelian Lie algebra with $g$ generators. We have
\[
M_{g,c}=F_{g,c}/[F_{g,c}^2,F_{g,c}^2],
\]  
where $[F_{g,c}^2,F_{g,c}^2]\subseteq F_{g,2}^4$. In particular, all algebras $F_{g,2}$ and  $F_{g,3}$ are metabelian.
For computing the index of $M_{g,c}$ we will use the following result of Ooms, see \cite{OOM1}, Theorem $14$, part $2$.

\begin{prop}\label{5.1}
Let $\Lg$ be a Lie algebra with basis $x_1,\ldots ,x_n$, and let $\Lh$ be an abelian subalgebra with basis
$h_1,\ldots ,h_m$. Then the following conditions are equivalent.
\begin{itemize}
\item[(1)] $\chi(\Lg)=2\dim \Lh-\dim \Lg$. 
\item[(2)] ${\rm rank}_R([x_i,h_j])=\dim \Lg-\dim \Lh$. 
\end{itemize}
\end{prop}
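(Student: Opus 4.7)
The plan is to pick a basis of $\Lg$ extending the given basis $h_1,\ldots,h_m$ of $\Lh$ and then compare the rank of $M(\Lg)$, which governs $\chi(\Lg)$ by $(6)$ of Proposition $\ref{2.3}$, with the rank of the submatrix $([x_i,h_j])$.

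I would first check that ${\rm rank}_R([x_i,h_j])$ does not depend on the basis $\{x_i\}$ of $\Lg$: any change of basis multiplies this matrix on the left by an invertible scalar matrix while inducing a $K$-algebra isomorphism on $R$, neither of which alters the rank. Hence I may take $x_1=h_1,\ldots,x_m=h_m$ and $x_{m+i}=y_i$ for some complement $\langle y_1,\ldots,y_{n-m}\rangle$ of $\Lh$ in $\Lg$. Because $\Lh$ is abelian, this forces
\[
M(\Lg)=\begin{pmatrix}0 & A\\ -A^T & B\end{pmatrix},\quad A=([h_i,y_j])\in M_{m\times(n-m)}(R),\ B=([y_i,y_j]),
\]
and the submatrix $([x_i,h_j])$ is $\begin{pmatrix}0\\ -A^T\end{pmatrix}$, of rank $r:={\rm rank}(A)$. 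Conditions $(1)$ and $(2)$ then read ${\rm rank}(M(\Lg))=2(n-m)$ and $r=n-m$ respectively, so it suffices to show these are equivalent.

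The equivalence follows from two rank estimates for $M(\Lg)$, where I note $r\le n-m$ since $A$ has only $n-m$ columns. For the upper bound, the first $m$ rows of $M(\Lg)$ vanish in the first $m$ columns and so have row rank equal to ${\rm rank}(A)=r$, while the last $n-m$ rows span a space of dimension at most $n-m$; hence ${\rm rank}(M(\Lg))\le r+(n-m)\le 2(n-m)$. For the lower bound, I would pick a non-singular $r\times r$ submatrix $A_0$ of $A$ and look at the corresponding $2r\times 2r$ principal submatrix $\begin{pmatrix}0 & A_0\\ -A_0^T & B_0\end{pmatrix}$ of $M(\Lg)$; swapping its two block rows turns it into a block-triangular matrix with determinant $(-1)^{r(r+1)}\det(A_0)^2=\det(A_0)^2\ne 0$, which shows ${\rm rank}(M(\Lg))\ge 2r$. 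Combining the bounds: $(2)\Rightarrow(1)$ comes from ${\rm rank}(M(\Lg))\ge 2r=2(n-m)$ meeting the upper bound $2(n-m)$; conversely $(1)\Rightarrow(2)$ comes from $2(n-m)={\rm rank}(M(\Lg))\le r+(n-m)$, which forces $r\ge n-m$ and hence $r=n-m$. The only slightly delicate step is the determinant computation underpinning the lower bound; the remainder is bookkeeping with the block decomposition.
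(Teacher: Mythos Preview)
Your argument is correct. The paper itself does not prove this proposition: it merely quotes it as a result of Ooms \cite{OOM1}, Theorem~14, part~2, and uses it as a black box. So there is no ``paper's own proof'' to compare against; what you have written is a genuine, self-contained proof that the paper does not supply.

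A couple of small remarks. First, ``invertible scalar matrix'' is misleading terminology---you presumably mean an invertible matrix with entries in $K$ (a scalar matrix usually means a multiple of the identity). Second, your sign exponent $(-1)^{r(r+1)}$ in the determinant computation is correct in value (since $r(r+1)$ is even), but the cleaner bookkeeping is: swapping the two $r\times r$ block rows contributes $(-1)^r$, and $\det(-A_0^T)=(-1)^r\det(A_0)$, so the determinant of the $2r\times 2r$ principal submatrix is $(-1)^{2r}\det(A_0)^2=\det(A_0)^2\neq 0$. Either way the conclusion ${\rm rank}\,M(\Lg)\ge 2r$ stands, and together with your upper bound ${\rm rank}\,M(\Lg)\le r+(n-m)$ the equivalence follows exactly as you wrote.
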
  

The index of $M_{g,c}$ is given as follows.

\begin{thm}\label{5.2}
Let $\Lg=M_{g,c}$, with $g\ge 3$ and $c\ge 3$. Then we have
\begin{align*}
\chi(\Lg) & = \dim \Lg-2g \\[0.1cm]
          & = 2\al(\Lg)-\dim (\Lg).
\end{align*}
\end{thm}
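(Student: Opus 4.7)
The plan is to apply Proposition \ref{5.1} with the abelian subalgebra $\Lh = [\Lg,\Lg] = \Lg^2$ (abelian because $\Lg$ is metabelian). Since the abelianization $\Lg/\Lg^2$ has dimension $g$, we have $\dim \Lh = \dim \Lg - g$, and condition $(1)$ of Proposition \ref{5.1} reads $\chi(\Lg) = 2(\dim \Lg - g) - \dim \Lg = \dim \Lg - 2g$. Everything therefore reduces to verifying condition $(2)$: the matrix $N = ([x_i, h_j])$, with $x_i$ ranging over a basis of $\Lg$ and $h_j$ over a basis of $\Lh$, has $\mathrm{rank}_R(N) = g$.

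Take the basis of $\Lg$ to consist of the generators $x_1, \ldots, x_g$ together with a basis $h_1, \ldots, h_m$ of $\Lh$. The rows of $N$ indexed by the $h_j$'s are zero, since $\Lh$ is abelian, so $\mathrm{rank}_R(N) \le g$. For the reverse inequality, it suffices to exhibit $\ell \in \Lg^*$ for which the $g$ linear functionals $\mu_i \colon \Lh \to K$, $h \mapsto \ell([x_i, h])$, $i = 1,\ldots,g$, are linearly independent; this forces $\mathrm{rank}_R(N) \ge g$ and thus equality.

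The main structural input is that in any metabelian Lie algebra, Jacobi combined with $[[x,y],a] = 0$ for $a \in \Lg^2$ yields $[x,[y,a]] = [y,[x,a]]$ for $x,y \in \Lg$, $a \in \Lg^2$. Hence the operators $\mathrm{ad}(x_i)$ commute on $\Lg^2$, endowing $\Lg^2$ with the structure of a module over $A = K[y_1,\ldots,y_g]/(y_1,\ldots,y_g)^{c-1}$ via $y_i \cdot a = [x_i,a]$. In the free-metabelian nilpotent setting this module admits a Koszul-type (syzygy) description and is sufficiently ``free'' that we can exhibit $g$ elements $h_{j_1},\ldots,h_{j_g} \in \Lh$ whose brackets with the generators span a $g$-dimensional subspace of $\Lg$ after pairing with a suitable $\ell$. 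Concretely, I would take $h_{j_k} = \mathrm{ad}(x_g)^{c-3}([x_k, x_g]) \in \Lg^{c-1}$ for $k = 1, \ldots, g-1$ together with one further element (involving $\mathrm{ad}(x_1)$ instead of $\mathrm{ad}(x_g)$) to break the antisymmetry collision $[x_g,x_g]=0$, compute each $[x_i, h_{j_k}] = \mathrm{ad}(x_g)^{c-3}[x_i,[x_k,x_g]] \in \Lg^c$ using commutativity, expand in the standard basis of $\Lg^c$ (left-normed Chen--Hall commutators with appropriate index inequalities), and choose $\ell$ to dually single out $g$ of those basis vectors.

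The main obstacle is precisely this explicit combinatorial verification: one has to rule out any hidden linear dependence among the $g$ chosen rows. This is where the hypotheses are used: $g \ge 3$ provides enough generators to sidestep small-$g$ antisymmetry obstructions, while $c \ge 3$ guarantees $\Lg^{c-1} \neq 0$ so that the required commutators exist. Once condition $(2)$ is established, Proposition \ref{5.1} gives $\chi(\Lg) = \dim \Lg - 2g$. For the second equation, the subalgebra $\Lh$ is abelian of dimension $\dim \Lg - g$, hence $\al(\Lg) \ge \dim \Lg - g$; on the other hand Proposition \ref{2.3}$(4)$ yields $\al(\Lg) \le (\chi(\Lg) + \dim \Lg)/2 = \dim \Lg - g$. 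Thus $\al(\Lg) = \dim \Lg - g$ and $2\al(\Lg) - \dim \Lg = \dim \Lg - 2g = \chi(\Lg)$, completing the proof.
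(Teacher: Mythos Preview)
Your framework is correct and matches the paper's: apply Proposition~\ref{5.1} with $\Lh=\Lg^2$, note that $\dim\Lh=\dim\Lg-g$, and reduce everything to showing $\mathrm{rank}_R([x_i,h_j])=g$. Your observation that the rows indexed by $\Lh$ vanish, giving rank $\le g$, is also fine, as is your derivation of $\al(\Lg)$ at the end.

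The gap is in the reverse inequality. You outline a direct construction---take $h_{j_k}=\ad(x_g)^{c-3}([x_k,x_g])$ for $k<g$ plus ``one further element'', pair with a suitable $\ell$---but you never carry out the linear-independence check, and you explicitly flag it as ``the main obstacle''. As written this is a sketch, not a proof: the candidate elements all lie in a thin slice of $\Lg^c$ (everything has at least $c-2$ copies of $x_g$), and it is not at all clear that the resulting $g\times g$ minor is nonsingular, nor what the missing $g$-th column is or why it does not collide with the others.

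The paper sidesteps this combinatorics entirely by a reduction-to-$c=3$ argument. Choosing the basis so that $x_{g+1},\ldots,x_{n_2}$ are the brackets $[x_a,x_b]$ and $x_{n_2+1},\ldots,x_{n_3}$ are the triple brackets, the $g\times\binom{g}{2}$ block $([x_i,x_{g+j}])$ has entries lying in $\langle x_{n_2+1},\ldots,x_{n_3}\rangle$ and is literally the same matrix whether one works in $M_{g,3}=F_{g,3}$ or in $M_{g,c}$ for $c\ge 4$. But for $F_{g,3}$ the index was already computed in Theorem~\ref{3.4}, and running Proposition~\ref{5.1} backwards there gives that this block has rank $g$. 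Since it sits as a submatrix of $([x_i,h_j])$ for every $c\ge 3$, the rank is $g$ in general. This reduction is the missing idea in your argument; it replaces an open-ended combinatorial verification with a one-line block-matrix observation leveraging the explicit $\ell$ already built in Theorem~\ref{3.4}.
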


\begin{proof}
Let $x_1,\ldots ,x_g$ be the generators of $M_{g,c}$, and $n_2=\dim M_{g,2}= \dim F_{g,2}$. Order the elements
$[x_i,x_j]$ for $1\le i<j\le g$ in some way and denote them by $x_{g+1},\ldots ,x_{n_2}$. Furthermore let
$n_3=\dim M_{g,3}= \dim F_{g,3}$ and denote the elements $[x_i,[x_j,x_k]]$ for $1\le j<k\le g,\, 1\le i\le k\le g$,
in some ordering, $x_{n_2+1},\ldots ,x_{n_3}$. Let $n=\dim M_{g,c}$ and choose a basis
$x_{n_3+1},\ldots ,x_n$ of $M_{g,c}^4$. Then $x_1,x_2,\ldots ,x_n$ is a basis of $M_{g,c}$. \\[0.2cm]
For the case $c=3$ we have shown in Theorem $\ref{3.4}$ that $\chi(M_{g,3})=n_3-2g$ for all $g\ge 3$.
Let $\Lh=F_{g,3}^2$. This subalgebra has a basis
\[
\{x_{g+1},\ldots ,x_{n_2},x_{n_2+1},\ldots ,x_{n_3}\}=\{h_1,h_2,\ldots ,h_{n_3-g}\}.
\]
Thus we have $\dim \Lh=n_3-g$, so that
\[
\chi(M_{g,3})=2\dim \Lh-n_3.
\]  
By Proposition $\ref{5.1}$ it follows that
\[
{\rm rank}_R([x_i,h_j])=n_3-\dim \Lh=g.
\]  
The matrix $([x_i,h_j])$ is a block matrix, with a principal block $([x_i,x_{g+j}])$ with $1\le i\le g$ and
$1\le j\le \binom{g}{2}$, and zero blocks otherwise. Note that $[x_i,x_{g+j}]\in \langle x_{n_2+1},\ldots ,x_{n_3}\rangle$.
It follows that also the principal block has rank $g$ over $R$, i.e.,
\[
{\rm rank}_R([x_i,x_{g+j}])=g.
\]
This will be used for the general case with $c\ge 4$. Let
\[
\Lh=M_{g,c}^2=\langle x_{g+1},\ldots , x_n\rangle = \langle h_1,\ldots ,h_{n-g} \rangle
\]
Then the matrix $([x_i,h_j])$ takes the block form
\[
\begin{pmatrix}
[x_i,x_{g+j}] & \vrule & [x_i,x_{n_2+k}] & \vrule & [x_i,x_{n_3+\ell}] \\
\hline \\[-0.48cm]
0 & \vrule & 0 & \vrule & 0 \\
\hline \\[-0.48cm]
0 & \vrule & 0 & \vrule & 0 \\
\hline \\[-0.48cm]
0 & \vrule & 0 & \vrule & 0
\end{pmatrix}
\]
where  $1\le i\le g$, $1\le j\le n_2-g$, $1\le k\le n_3-n_2$ and $1\le \ell\le n-n_3$.
The matrix in the top left corner is exactly the matrix we had obtained in the case $c=3$. Its rank equals $g$,
so that the rank of the above block matrix is also  equal to $g$. Since $g=\dim \Lg-\dim \Lh$, part $(1)$ in Proposition $\ref{5.1}$
is satisfied. Hence $(2)$ follows, i.e., we have
\begin{align*}
\chi(\Lg) & = 2\dim \Lh-\dim \Lg \\[0.1cm]
          & = 2(\dim \Lg-g)-\dim \Lg \\[0.1cm]
          & = \dim \Lg-2g.
\end{align*}  
By Theorem $14$ of \cite{OOM1}, $\al(\Lg)=\dim \Lh$, so that $\chi(\Lg)=2\al(\Lg)-\dim \Lg$ and $\al(\Lg)=\dim \Lg-g$.
\end{proof}  

Let us state the last result on $\al(\Lg)$ as a corollary.

\begin{cor}
Let $\Lg=M_{g,c}$ for $c\ge 3$ and $g\ge 3$. Then $\al(\Lg)=\dim \Lg-g$.
\end{cor}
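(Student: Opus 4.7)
My plan is to prove $\al(\Lg) = \dim \Lg - g$ by matching upper and lower bounds, using Theorem~\ref{5.2} as the main input.

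For the lower bound, I would exhibit an explicit abelian subalgebra of the desired dimension, namely the derived subalgebra $\Lh = [\Lg,\Lg] = M_{g,c}^2$. Because $\Lg$ is metabelian, $[\Lh,\Lh] = 0$, so $\Lh$ is in fact an abelian ideal. Its dimension equals $\dim \Lg - g$: the abelianization $\Lg/\Lh$ is spanned by the images of the $g$ generators $x_1,\ldots,x_g$, and these images are linearly independent since the defining relations of $M_{g,c}$ are all commutators. This gives the bound
\[
\al(\Lg) \ge \dim \Lh = \dim \Lg - g.
\]

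For the upper bound, I would invoke the general inequality from part (4) of Proposition~\ref{2.3}, namely $\chi(\Lg) \ge 2\al(\Lg) - \dim \Lg$, which rearranges to
\[
\al(\Lg) \le \frac{\chi(\Lg) + \dim \Lg}{2}.
\]
Substituting $\chi(\Lg) = \dim \Lg - 2g$ from Theorem~\ref{5.2} yields $\al(\Lg) \le \dim \Lg - g$, and combining with the previous inequality gives the claimed equality.

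There is no real obstacle here, because Theorem~\ref{5.2} does all the heavy lifting. It is worth noting that this argument derives the conclusion directly from Proposition~\ref{2.3}(4) together with the index computation, so it bypasses the appeal to Ooms' Theorem 14 of \cite{OOM1} made at the end of the proof of Theorem~\ref{5.2} in order to extract $\al(\Lg) = \dim \Lh$.
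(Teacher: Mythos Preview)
Your argument is correct. It differs from the paper's treatment only in how the upper bound on $\al(\Lg)$ is obtained: the paper simply records this corollary without a separate proof, pointing back to the last line of the proof of Theorem~\ref{5.2}, where $\al(\Lg)=\dim\Lh$ is read off directly from Theorem~14 of \cite{OOM1}. You instead sandwich $\al(\Lg)$ between the explicit lower bound $\dim\Lh$ and the upper bound $(\chi(\Lg)+\dim\Lg)/2$ coming from Proposition~\ref{2.3}(4) together with the index formula $\chi(\Lg)=\dim\Lg-2g$. This is exactly the route the paper itself takes for the analogous corollary on $\al(F_{g,3})$ following Theorem~\ref{3.4}, so it is entirely in the paper's spirit; it has the small advantage of being self-contained once the index is known, at the minor cost that Proposition~\ref{2.3} is stated there under an algebraic closure hypothesis (though the inequality in part~(4) in fact holds over any field, since an abelian subalgebra is isotropic for every form $B_\ell$).
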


It turns out that the statement of Theorem $\ref{5.2}$ is also true for $g=2$ and $c\ge 4$. However, we can no longer use
Theorem $\ref{3.4}$ in the proof, which is only valid for $g\ge 3$. Nevertheless we can adapt the proof to obtain the following result.

\begin{thm}\label{5.4}
Let $K$ be an algebraically closed field of characteristic zero, and $\Lg=M_{2,c}$ with $c\ge 4$. Then we have
\begin{align*}
\chi(\Lg) & = \dim \Lg-4 \\[0.1cm]
          & = 2\al(\Lg)-\dim (\Lg) \\[0.1cm]
          & = \frac{c^2-c-4}{2}
\end{align*}
\end{thm}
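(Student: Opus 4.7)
The strategy is to imitate the proof of Theorem \ref{5.2}: take $\Lh=\Lg^2=[\Lg,\Lg]$ as the abelian subalgebra (it is abelian because $\Lg$ is metabelian) and apply Proposition \ref{5.1}. In the free metabelian Lie algebra on two generators, $\Lg^2$ is generated as a $K[\bar x_1,\bar x_2]$-module by $h_1=[x_1,x_2]$ without relations, where $\bar x_k$ acts by $\ad(x_k)$. Consequently the weight-$k$ component of $\Lg^2$ has dimension $k-1$ for $2\le k\le c$, giving $\dim\Lh=\binom{c}{2}$ and $\dim\Lg=2+\binom{c}{2}=\tfrac{c^2-c+4}{2}$. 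In particular $\dim\Lg-\dim\Lh=2$, so the target identity $\chi(\Lg)=\dim\Lg-4$ is precisely the equality $\chi(\Lg)=2\dim\Lh-\dim\Lg$ of Proposition \ref{5.1}(1). By Proposition \ref{5.1} it therefore suffices to prove that $N:=([x_i,h_j])$ has rank exactly $2$ over $R$.

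Because $\Lg$ is metabelian, $[h_k,h_j]=0$, so every row of $N$ indexed by a basis vector of $\Lh$ vanishes; only the two rows for $x_1,x_2$ can contribute, and hence ${\rm rank}_R N\le 2$. For the reverse inequality I would exhibit a concrete nonzero $2\times 2$ minor. Set $h_2=[x_1,h_1]$, $h_3=[x_2,h_1]$, $h_4=[x_1,h_2]$, $h_5=[x_1,h_3]$; since $c\ge 4$, these are genuine distinct basis vectors of $\Lg^2$. The metabelian identity $[x_i,[x_j,a]]=[x_j,[x_i,a]]$ for $a\in\Lg^2$ gives $[x_2,h_2]=[x_1,h_3]=h_5$, so the submatrix of $N$ formed by the two generator rows and the columns indexed by $h_1,h_2$ is
\[
\begin{pmatrix} h_2 & h_4 \\ h_3 & h_5 \end{pmatrix},
\]
with determinant $h_2 h_5 - h_3 h_4$. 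This is nonzero in $R=K(x_1,\ldots,x_n)$ because $h_2,h_3,h_4,h_5$ are distinct basis indeterminates. Hence ${\rm rank}_R N=2$, and Proposition \ref{5.1} yields $\chi(\Lg)=\dim\Lg-4=\tfrac{c^2-c-4}{2}$.

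For the statement about $\al$, the abelian subalgebra $\Lh$ of dimension $\dim\Lg-2$ gives $\al(\Lg)\ge\dim\Lg-2$; conversely, Proposition \ref{2.3}(4) (this is where algebraic closure of $K$ enters) yields $\al(\Lg)\le\tfrac{\chi(\Lg)+\dim\Lg}{2}=\dim\Lg-2$. Hence $\al(\Lg)=\dim\Lg-2$ and $2\al(\Lg)-\dim\Lg=\dim\Lg-4$. The main obstacle in this plan is ensuring nondegeneracy of the $2\times 2$ minor, which relies on the existence of two independent weight-$4$ basis vectors $h_4,h_5$; this is precisely why the theorem requires $c\ge 4$. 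At $c=3$ the elements $h_4,h_5$ vanish, the minor collapses to a rank-$1$ column $(h_2,h_3)^T$, and one recovers the anomalous value $\chi(F_{2,3})=3$ from Example \ref{3.3}.
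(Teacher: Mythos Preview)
Your proof is correct and follows essentially the same route as the paper's: both take $\Lh=\Lg^2$, apply Proposition~\ref{5.1}, and verify ${\rm rank}_R([x_i,h_j])=2$ by exhibiting the same $2\times 2$ minor coming from the first two columns (your $h_2h_5-h_3h_4$ is, up to sign, the paper's $x_4x_7-x_5x_6$). The only cosmetic difference is that the paper deduces $\al(\Lg)=\dim\Lh$ directly from Ooms' Theorem~14, whereas you recover it from the inequality in Proposition~\ref{2.3}(4); both arguments ultimately rest on the same result.
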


\begin{proof}
Let us consider the following basis for $\Lg$. We start with the generators $x_1,x_2$ and then take
\begin{align*}  
x_3 & = [x_2,x_1], \\
x_4 & = [[x_2,x_1],x_1]=[x_3,x_1], \\
x_5 & = [[x_2,x_1],x_2]=[x_3,x_2], \\
x_6 & = [[[x_2,x_1],x_1],x_1]=[x_4,x_1], \\
x_7 & = [[[x_2,x_1],x_1],x_2]=[x_4,x_2] \\
    & = [[[x_2,x_1],x_2],x_1]=[x_5,x_1], \\
x_8 & = [[[x_2,x_1],x_2],x_2]=[x_5,x_2], \\
\end{align*}   
and complete the basis by choosing $x_9,\ldots ,x_n\in \Lg^5$. See e.g.\ \cite{POR} for a complete basis for the free metabelian Lie algebra.
Now we can apply Proposition $\ref{5.1}$ with
\[
\Lh=\Lg^2=\langle x_3,x_4,\ldots ,x_n\rangle =\langle h_1,h_2,\ldots ,h_{n-2}\rangle.
\]
The matrix $([x_i,h_j])$ is of the following form:
\[
\begin{pmatrix}
-x_4 & -x_6 & -x_7 & \ast & \cdots & \ast \\
-x_5 & -x_7 & -x_8 & \ast & \cdots & \ast \\
0   & 0  &  0 & 0 & 0 & 0 \\
\vdots & \vdots & \vdots & \vdots & \ddots & \vdots \\
0   & 0  &  0 & 0 & 0 & 0 \\
\end{pmatrix}  
\]  
Clearly this matrix has rank $2$. Since $\dim \Lh=n-2=\dim \Lg-2$ we obtain
\[
{\rm rank}([x_i,h_j])=2=\dim \Lg-\dim \Lh.
\]
Hence  Proposition $\ref{5.1}$ yields
\[
\chi(\Lg)=2\dim \Lh-\dim \Lg.
\]
As in the last line of the proof of Proposition $5.2$ we have $\al(\Lg)=\dim \Lg-2$. It is well known (e.g.\ see the basis for the free metabelian Lie algebra as in \cite{POR}) that
\[
n=\dim \Lg=\frac{c^2-c+4}{2}.
\]
This finishes the proof.
\end{proof} 

\section{The index of filiform nilpotent Lie algebras}

Let $\Lg$ be a filiform Lie algebra of dimension $n\geq 3$ over a field $K$ of characteristic zero.
According to Vergne, see \cite{VER}, page $103$, $\Lg$ has an adapted basis, which is a  basis $\{e_1,e_2, \ldots ,e_n\}$
of $\Lg$ as a vector space satisfying:
\[\begin{array}{l}
{[e_1,e_i]}= e_{i+1} \mbox{ for } i= 2,3,\ldots , n-1\\[1mm]
{[e_1,e_n]}= 0 \\[1mm]
{[e_2,e_3]} \in \langle e_5,e_6, \ldots , e_n \rangle  
\end{array} 
\]
As shown in \cite{VER}, it follows also that there is an $\alpha\in K$ such that
\[\begin{array}{l}
{[e_i,e_j]}\in \langle e_{i+j}, e_{i,+j+1}, \ldots, e_n\rangle \mbox{ if  } i+j \leq n\\[1mm]
{[e_i,e_{n+1-i}]} = (-1)^i \alpha e_n \mbox{ for }i =2,3,\ldots n-1\mbox{ with }\alpha =0 \mbox{ if $n$ is odd.}  
\end{array} 
\]
For $i=1,2,\ldots, n$ we define the subspaces $\Lg_i= \langle e_i, e_{i+1}, \ldots, e_n\rangle$. 

\begin{lem}
Let $n\geq 4$. The spaces $\Lg_i$ are independent of the chosen adapted basis for $\Lg$. They are characteristic ideals of $\Lg$.
\end{lem}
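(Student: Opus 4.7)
The plan is to split the statement according to whether $i=2$ or not. For $i=1$ and $i\ge 3$ the claim will reduce to identifying $\Lg_i$ with a term of the lower central series of $\Lg$. For $i=2$ I will give an intrinsic description of $\Lg_2$ using only the Lie bracket, from which both basis-independence and the characteristic-ideal property follow.

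First I would prove by induction on $k\ge 2$ that $\Lg^{k}=\Lg_{k+1}$. For $k=2$ the inclusion $\Lg_3\subseteq\Lg^2$ is immediate from $e_{j+1}=[e_1,e_j]\in\Lg^2$ for $2\le j\le n-1$, while the Vergne relations $[e_i,e_j]\in\Lg_{i+j}$ for $i,j\ge 2$ (together with the exceptional bracket $[e_i,e_{n+1-i}]=(-1)^i\alpha e_n\in\Lg_n$) give the reverse inclusion $\Lg^2\subseteq\Lg_3$. The inductive step proceeds identically with $\Lg^{k-1}=\Lg_k$ in place of $\Lg$. Combined with the trivial identity $\Lg_1=\Lg=\Lg^1$, this shows that for $i\neq 2$ the subspace $\Lg_i$ is a term of the lower central series, hence an ideal defined intrinsically from the Lie bracket, preserved by every derivation of $\Lg$, and independent of the adapted basis.

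To handle $\Lg_2$, which is not a term of the lower central series, I propose the intrinsic characterization
\[
\Lg_2=\{x\in\Lg\mid [x,\Lg^2]\subseteq\Lg^4\}.
\]
The containment $\Lg_2\subseteq\{\cdots\}$ reduces to checking that $[e_i,e_j]\in\Lg^4=\Lg_5$ whenever $i\ge 2$ and $j\ge 3$; this follows from the Vergne relations together with the hypothesis $n\ge 4$, after a brief case analysis for brackets with $i+j\ge n+1$ using the exceptional formula and the filtration property $[\Lg^a,\Lg^b]\subseteq\Lg^{a+b}$. For the reverse containment, since $[e_1,e_3]=e_4\notin\Lg^4$ the linear subspace $\{\cdots\}$ is a proper subspace of $\Lg$ containing $\Lg_2$; as $\Lg_2$ has codimension one in $\Lg$, we conclude $\{\cdots\}=\Lg_2$. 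The right-hand side is defined purely in terms of the bracket and the lower central series, so $\Lg_2$ is independent of the adapted basis; being a linear subspace containing $\Lg^2=[\Lg,\Lg]$, it is automatically an ideal.

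For the characteristic property, let $D\in\Der(\Lg)$ and $x\in\Lg_2$. Every term of the lower central series is preserved by $D$, so for any $y\in\Lg^2$ the Leibniz rule gives
\[
[Dx,y]=D[x,y]-[x,Dy]\in D(\Lg^4)+[x,\Lg^2]\subseteq\Lg^4,
\]
and hence $Dx\in\Lg_2$. The only conceptual difficulty is spotting the intrinsic description of $\Lg_2$; the hypothesis $n\ge 4$ is needed precisely so that $\Lg^4\subsetneq\Lg^2$, making the defining condition nontrivial. Once the characterization is in hand, basis-independence, the ideal property, and the characteristic property are routine Leibniz-rule manipulations.
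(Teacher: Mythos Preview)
Your proof is correct and follows essentially the same approach as the paper. For $i\ge 3$ both identify $\Lg_i$ with the lower-central-series term $\Lg^{i-1}$; for $i=2$ the paper passes to the quotient $\bar\Lg=\Lg/\Lg^4$ and identifies $\Lg_2$ as the preimage of the centralizer $C_{\bar\Lg}(\bar\Lg^{\,2})$, which is exactly your condition $\{x\in\Lg\mid [x,\Lg^2]\subseteq\Lg^4\}$ written without the quotient.
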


\begin{proof}
It is clear that for $i\geq 3$ we have $\Lg_i=\Lg^{i-1}$, so these spaces are indeed independent of the chosen basis
and theses are  characteristic ideals of $\Lg$. For $i=2$, we consider the natural projection 
\[
p \colon \Lg \to \bar{\Lg}=\Lg/\Lg^4 .
\]
We use $\bar{e}_i=p(e_i)$ to denote the natural projection of the basis vectors. Then
\[
\bar{e}_5 = \bar{e}_6 = \cdots = \bar{e}_n =0
\]  
and $\{ \bar{e}_1, \bar{e}_2, \bar{e}_3, \bar{e}_4\}$ forms a basis  of $\bar{\Lg}$ and the only non trivial Lie brackets between these
basis vectors are 
\[ [ \bar{e}_1 , \bar{e}_2 ] = \bar{e}_3, \;\; [ \bar{e}_2 , \bar{e}_3 ] = \bar{e}_4. \]
Note that $\Lg_2=p^{-1} ( \langle \bar{e}_2, \bar{e}_3, \bar{e}_4\rangle )=p^{-1}(\bar{\Lg}_2)$.
But  $ \langle \bar{e}_2, \bar{e}_3, \bar{e}_4\rangle $ is a characteristic ideal of $\bar{\Lg}$ since
\[  \langle \bar{e}_2, \bar{e}_3, \bar{e}_4\rangle =C_{\bar{\Lg}}( \langle \bar{e}_3, \bar{e}_4\rangle ) =
C_{\bar{\Lg}}( \bar{\Lg}^2)=\{ x \in \bar\Lg \mid [x, \Lg^2 ] =0 \}
\]
and it is clear that this is a characteristic ideal of $\bar{\Lg}$. Moreover, this ideal is independent of the chosen basis, so
we can conclude that also $\Lg_2$ is independent of the chosen adapted basis and is a characteristic ideal of $\Lg$.
\end{proof}

\begin{defi}
Let $L_n$ be the standard graded filiform Lie algebra algebra with adapted basis $\{e_1,\ldots ,e_n\}$, with Lie brackets
$[e_1,e_i]=e_{i+1}$ for $2\le i\le n-1$, and all other brackets (not involving $e_1$) equal to zero. \\
For $n$ even, let $Q_n$ the graded filiform Lie algebra with adapted basis $\{e_1,\ldots ,e_n\}$, and nonzero Lie brackets defined by
\begin{align*}
[e_1,e_i] & = e_{i+1} \text{ for } 2\le i\le n-1, \\[0.1cm]
[e_i, e_{n+1-i}]  & =(-1)^i e_n \text{ for } 2\le i\le \frac{n}{2}.
\end{align*}  
\end{defi}  

The index of graded filiform Lie algebras has been determined in \cite{ADM}. 
In particular we have $\chi(L_n)=n-2$, and $\chi(Q_n)=2$. Lie algebras $\Lg$ with $\chi(\Lg)=0$ are {\em Frobenius Lie algebras}.
They have trivial center and hence cannot be nilpotent. So the minimal index of a filiform Lie algebra is equal to $1$.
The Heisenberg Lie algebra, which is $L_3$, has index $1$. We can give a characterization for filiform Lie algebras of index $1$
as follows. Recall that the dimension of a Lie algebra with odd index is necessarily odd.

\begin{thm} Let $\Lg$ be an odd dimensional filiform Lie algebra. Then we have $\chi(\Lg)=1$ if and only if
$[\Lg_i, \Lg_{n-i}] \neq 0$ for all $i=1,\ldots , n-1$.
\end{thm}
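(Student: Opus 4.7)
The plan is to apply the determinantal formula $\chi(\Lg) = n - \operatorname{rank}_R M(\Lg)$ of Proposition~\ref{2.3}(6), where $M(\Lg) = ([e_i,e_j])$ is viewed as an $n \times n$ skew-symmetric matrix over $R = K(x_1,\dots,x_n)$ under the identification $e_k \leftrightarrow x_k$. A direct check in the adapted basis shows $Z(\Lg) = \langle e_n\rangle$, so the last row and column of $M(\Lg)$ vanish and $\operatorname{rank}_R M(\Lg) \le n-1$. Since $n$ is odd while the rank of a skew-symmetric matrix is even, this rank equals either $n-1$ (giving $\chi(\Lg) = 1$) or at most $n-3$ (giving $\chi(\Lg) \ge 3$). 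Every $(n-1) \times (n-1)$ minor of $M(\Lg)$ other than the principal one obtained by deleting the last row and column contains a zero row or column, so the theorem reduces to showing that $\det A \ne 0$ in $R$ precisely when the hypothesis holds, where $A$ denotes that principal submatrix.

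I would then read off the entries of $A$ from the Vergne relations. For $i,j \ge 2$ with $i+j > n$ one has $A_{ij} = 0$: the boundary case $i+j = n+1$ gives $(-1)^i \alpha e_n = 0$ as $n$ is odd, and $i+j > n+1$ forces $[e_i,e_j] \in \Lg^{i+j-2} \subseteq \Lg^n = 0$. For $i+j \le n$ the entry $A_{ij}$ is a polynomial in $x_{i+j},\dots,x_n$, and the first row and column satisfy $A_{1,j} = x_{j+1}$ and $A_{i,1} = -x_{i+1}$ for indices in $\{2,\dots,n-1\}$. In the Leibniz expansion $\det A = \sum_\sigma \operatorname{sign}(\sigma) \prod_i A_{i,\sigma(i)}$, a nonvanishing summand requires $i + \sigma(i) \le n$ for every $i$: when $i,\sigma(i) \ge 2$ this is the Vergne grading constraint $[e_i,e_{\sigma(i)}] \ne 0$, and when $i = 1$ or $\sigma(i) = 1$ the inequality is automatic since $1 + (n-1) = n$. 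Summing over $i$ yields $(n-1)n \le (n-1)n$, so equality $i + \sigma(i) = n$ is forced for each $i$ and $\sigma$ must be the antidiagonal permutation $\sigma(i) = n - i$.

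For this unique surviving $\sigma$, each entry $A_{i,n-i}$ is a scalar multiple of $x_n$, because $[e_i,e_{n-i}] \in \Lg_n = \langle e_n\rangle$. Writing $A_{i,n-i} = c_i x_n$ with $c_1 = 1$, $c_{n-1} = -1$, and $[e_i,e_{n-i}] = c_i e_n$ for $2 \le i \le n-2$, one gets $\det A = \pm x_n^{n-1} \prod_{i=1}^{n-1} c_i$, which is nonzero in $R$ if and only if $c_i \ne 0$ for every $i \in \{2,\dots,n-2\}$. To identify this with the hypothesis I would verify that $[\Lg_i, \Lg_{n-i}] = K c_i e_n$ for such $i$: any generator $[e_a,e_b]$ with $a \ge i$, $b \ge n-i$, $a,b \ge 2$ must satisfy $a + b = n$ by the same weight bound, and then $a \ge i$ together with $b = n-a \ge n-i$ force $a = i$, $b = n-i$. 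The extreme cases $i = 1, n-1$ are automatic from $[e_1,e_{n-1}] = e_n$. The step I expect to be the main obstacle is this combinatorial weight argument: one must verify carefully that the atypical first row and column entries, which come from the $[e_1,\cdot]$ brackets rather than from the Vergne-graded part, still respect $i + \sigma(i) \le n$ so that no non-antidiagonal permutation can contribute.
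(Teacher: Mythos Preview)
Your proposal is correct and takes essentially the same approach as the paper: both reduce $\chi(\Lg)=1$ to the nonvanishing of the determinant of the $(n-1)\times(n-1)$ bracket matrix, observe its anti-upper-triangular shape (your ``weight argument'' $i+\sigma(i)\le n$ is exactly this), and read off the determinant as $\pm\prod_i \alpha_i$ times a power of the $e_n$-coordinate. The only difference is cosmetic---you work generically over $R$ via Proposition~\ref{2.3}(6) while the paper evaluates at a general $\ell\in\Lg^*$ and inspects $(\ell([e_i,e_j]))$ directly---and your anticipated ``main obstacle'' about the first row and column is harmless, since $1+j\le n$ automatically for $j\le n-1$.
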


\begin{proof}
Let us fix an adapted basis $e_1,e_2, \ldots, e_n$ of a given odd dimensional filiform Lie algebra $\Lg$. Recall that
$[e_i,e_{n-i}] = \alpha_i e_n$ for some $\alpha_i$, with $\alpha_1=-\alpha_{n-1}=1$, and that $[e_i,e_j]=0$ as soon as
$i+j > n$ (since we are in the odd dimensional case). Now let $a=a_i e_i + a_{i+1} e_{i+1} + \cdots + a_n e_n \in \Lg_i$
and $b=b_{n-i} e_{n-i} + b_{n-i+1} e_{n-i+1} + \cdots + b_n e_n \in \Lg_{n-i}$. Then we have 
$[a,b]= a_i b_{n-i} \alpha_i e_n$, so $[\Lg_i , \Lg_{n-i}]=0$ if and only if $\alpha_i =0$. \\[0.2cm]
The index of $\Lg$ is $1$ if and only if we can find an element $\ell\in \Lg^\ast$ such that the subalgebra 
\[
\Lg(\ell) =\{ x \in \Lg \mid \ell ([y,x])=0 \mbox{ for all }y\in \Lg\}
\]
has dimension $1$. Let us consider an element $\ell \in \Lg^\ast$. Then $x =x_1 e_1 +x_2 e_2 + \cdots + x_n e_n$ belongs to $\Lg(\ell)$
if and only if $\ell([e_i,x])=0$ for all $i=1,2,\ldots,n-1$. This leads to a system of $n-1$ linear equations in the variables
$x_1,x_2, \ldots, x_{n-1}$ where the $i$-th equation is given by
\begin{eqnarray*}
0 = \ell([e_i,x]) & = & \ell ( x_1 [e_i,e_1] + x_2 [e_i,e_2] + \cdots + x_n[e_i, e_n]) \\
                           & = & \ell ( x_1 [e_i,e_1] + x_2 [e_i,e_2] + \cdots + x_{n-i} [e_i, e_{n-i}]) \\
                           & = & x_1 \ell ([e_i,e_1]) + x_2 \ell ([e_i,e_2]) + \cdots + x_{n-i-1} \ell([e_i, e_{n-i-1}]) + x_{n-i} \alpha_i \ell(e_n) 
\end{eqnarray*}
Note that indeed the variable $x_n$ does not appear in the system of equations, which reflects the fact that
$Z(\Lg) \subseteq \Lg(\ell)$ for all $\ell$.  So, $\Lg(\ell)$ is of dimension $1$ 
if and only if $Z(\Lg) = \Lg(\ell)$
which is equivalent to the fact that the system of linear equations in the variables $x_1,x_2, \ldots, x_{n-1}$ only
has the trivial solution $x_1=x_2=\cdots = x_{n-1} =0$.
The matrix corresponding to this system of equations is given by:
\[
\left(\begin{array}{cccccc}
\ell ([e_1,e_1]) & \ell([e_1,e_2] )& \cdots & \ell ( [e_1,e_{n-3}]) & \ell ([e_1, e_{n-2}] )& \alpha_1 \ell(e_n) \\
\ell([e_2,e_1]) & \ell([e_2,e_2]) &  \cdots & \ell(  [e_2,e_{n-3}]) & \alpha_2 \ell(e_n) & 0 \\
 \ell([e_3,e_1]) & \ell([e_3,e_2]) &  \cdots  & \alpha_3 \ell(e_n) & 0 & 0\\
\vdots & \vdots & \iddots & \vdots & \vdots & \vdots\\
\ell([e_{n-2}, e_1]) & \alpha_{n-2}\ell(e_n) & \cdots & 0 & 0 & 0 \\
\alpha_{n-1} \ell(e_n) & 0 & \cdots &  0 & 0 & 0 
      \end{array}\right)
\]
The system of equations only has the trivial solution if and only if the matrix has full rank, which is the case when $\ell(e_n)\neq 0$ and 
$\alpha_i\neq 0$, for all $i=1,2,\ldots , n-1$. We can always choose $\ell$ such that $\ell(e_n)\neq 0$, e.g.\, take $\ell = e_n^\ast$. 
The requirement  that $\alpha_i\neq 0$ for all $i$ is equivalent to the fact that  $[\Lg_i, \Lg_{n-i}]\neq 0$ for all $i$,
which is finishes the proof.
\end{proof} 

We can also obtain  a lower bound on the index of filiform Lie algebras as follows.

\begin{thm}\label{lower bound}
Let $\Lg$ be a filiform Lie algebra of dimension $n$. Suppose that $[\Lg_k, \Lg_k]=0$ for some $k\geq 2$. Then we have
$\chi(\Lg) \ge n -2 (k-1)$.
\end{thm}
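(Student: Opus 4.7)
The plan is to reduce the statement to Proposition~\ref{2.3}(4) applied to $\Lg_k$ viewed as an abelian subspace of $\Lg$. From the definition $\Lg_k=\langle e_k,e_{k+1},\ldots,e_n\rangle$ we read off $\dim \Lg_k = n-k+1$, and by the preceding lemma $\Lg_k$ is a characteristic ideal of $\Lg$. The hypothesis $[\Lg_k,\Lg_k]=0$ then says that $\Lg_k$ is an abelian ideal, hence in particular an abelian subalgebra of $\Lg$.

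Consequently $\al(\Lg)\ge \dim \Lg_k = n-k+1$, and Proposition~\ref{2.3}(4) yields
\[
\chi(\Lg)\;\ge\;2\al(\Lg)-\dim \Lg\;\ge\;2(n-k+1)-n\;=\;n-2(k-1),
\]
which is the asserted lower bound.

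I expect no substantive obstacle: the lemma about the $\Lg_i$ already does all the structural work, and what remains is just the instance $\La=\Lg_k$ of the general inequality $\chi(\Lg)\ge 2\al(\Lg)-\dim\Lg$. The only point worth a brief comment is that Proposition~\ref{2.3}(4) was stated over an algebraically closed field, whereas here $K$ is only assumed to have characteristic zero. However, this particular inequality is purely linear-algebraic: since $B_\ell$ vanishes identically on $\La\times\La$ for any $\ell\in\Lg^*$, the image of $B_\ell\colon \La\to\Lg^*$ lies in the annihilator $\La^\circ$, of dimension $n-\dim\La$, so its kernel (which is contained in $\Lg(\ell)$) has dimension at least $2\dim\La-n$. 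Hence the bound used above holds over any field, and the argument goes through without change.
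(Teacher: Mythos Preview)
Your argument is correct and in fact coincides with the alternative proof the paper itself records in the Remark immediately following the theorem: one simply observes that $\Lg_k$ is an abelian subalgebra of dimension $n-k+1$ and applies the inequality $\chi(\Lg)\ge 2\al(\Lg)-\dim\Lg$. The paper's \emph{main} proof is slightly different: it works directly with the $(n-1)\times(n-1)$ matrix $A=(\ell([e_i,e_j]))$ for an arbitrary $\ell\in\Lg^*$, notes that the hypothesis $[\Lg_k,\Lg_k]=0$ forces the lower-right $(n-k)\times(n-k)$ block of $A$ to vanish, and concludes that the rank of $A$ is at most $2(k-1)$, whence $\dim\Lg(\ell)\ge n-2(k-1)$. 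The two approaches are essentially equivalent in content; your route packages the rank estimate inside the general inequality (4), while the paper's main proof unpacks it for this specific matrix. Your added justification that the bound $\chi(\Lg)\ge 2\dim\La-\dim\Lg$ holds over any field (via the map $\La\to\Lg^*$, $x\mapsto\ell([x,\cdot])$, landing in $\La^\circ$) is a nice touch that makes the argument self-contained; the paper's Remark handles this instead by passing to the algebraic closure.
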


\begin{proof}
As in the proof of the previous theorem we fix an adapted basis $\{e_1, e_2, \ldots , e_n\}$ for the filiform Lie algebra
$\Lg$ and consider an element $\ell \in \Lg^\ast$. Again an element
\[
x=x_1 e_1 +x_2 e_2 + \cdots + x_n e_n\in \Lg(\ell) 
\]
if and only if the variables $x_1, x_2, \ldots, x_{n-1}$ satisfy a system of $n-1$ linear equations, where the $i$-th equation
is given by 
\[
0 = \ell([e_i,x]) = x_1  \ell ([e_i,e_1]) + x_2 \ell ([e_i,e_2]) + \cdots + x_{n-1} \ell([e_i, e_{n-1}]).
\]
Let $A$ denote the $(n-1) \times (n-1)$ matrix associated to this system of linear equations, then
$A_{i,j}= \ell([e_i,e_j])$. The condition $[\Lg_k, \Lg_k]=0$ implies that $A_{i,j}=0$ as soon as $i\geq k $ and $j\geq k$.
So $A$ is of the form
\[
A= \left(\begin{array}{ccc|cccc}
A_{1,1} & \cdots & A_{1,k-1} & A_{1,k} & A_{1,k+1} & \cdots & A_{1,n-1} \\
\vdots &  \ddots & \vdots & \vdots & \vdots & \ddots & \vdots \\
A_{k-1,1}  & \cdots & A_{k-1,k-1} & A_{k-1,k} & A_{k-1,k+1} & \cdots & A_{k-1,n-1} \\ \hline
A_{k,1}  & \cdots & A_{k,k-1} & 0 & 0 & \cdots & 0\\
A_{k+1,1}  & \cdots & A_{k+1,k-1} & 0 & 0 & \cdots & 0 \\
\vdots  & \ddots & \vdots & \vdots & \vdots & \ddots & \vdots \\
A_{n-1,1} &  \cdots & A_{n-1,k-1} & 0 & 0 & \cdots & 0
\end{array} \right)
\]
From this it is clear that the rank of $A$ is at most $2(k-1)$ and so the dimension of $\Lg(\ell)$ is at least $n - 2(k-1)$. 
\end{proof}

\begin{rem}
We can give an alternative proof of this theorem by using the inequality
\[
\chi(\Lg)\ge 2\al(\Lg)-\dim \Lg
\]
from Proposition $\ref{2.3}$. For this, we may assume that $K$ is algebraically closed. By assumption $\Lg_k$ is an abelian
subalgebra of dimension $n-k+1$. This yields
\[
\chi(\Lg)\ge 2(n-k+1)-n=n-2(k-1). \qed
\]  
\end{rem}  

We now want to show that the bound on $\chi(\Lg)$ given in the theorem is sharp, for $2 k \leq n$. We will do this by constructing
a family of filiform Lie algebras $\Lg_{n,k}$ depending on two integer parameters $n$ and $k$ where $n\geq 3$ will be the dimension
of the filiform Lie algebra and $k$ is an odd number satisfying $3 \leq k \leq n$. \\[0.2cm]
The Lie algebras $\Lg_{n,k}$ will be constructed using semidirect products, so let us shortly recall the semidirect product
of two Lie algebras. Let $\La$ and $\Lb$ be two algebras where the respective brackets will be denoted by
$[\cdot, \cdot ]_{\La}$ and  $[\cdot, \cdot ]_{\Lb}$. Assume moreover that there is a Lie algebra morphisms
$\rho: \Lb \to \Der(\La)$, then we can define  the semidirect product Lie algebra $\La\rtimes_\rho \Lb$ which has
$\La \times \Lb$ as an underlying vector space and where the Lie bracket is given by 
\[
[(a_1,b_1), (a_2,b_2)] = ( [a_1,a_2]_\La + \rho(b_1) (a_2) -\rho(b_2)(a_1), [b_1,b_2]_\Lb).
\]
for all $a_1,a_2 \in \La$ and all $b_1,b_2\in \Lb$.
We now fix $n\geq 3$ and an odd integer $k$ with $3 \leq k \leq n$. We let $\La$ be the $(n-1)$-dimensional Lie algebra with
basis $\{\tilde{e}_2, \tilde{e}_3, \ldots , \tilde{e}_n\}$ and where the Lie brackets are given by 
\[ [\tilde{e}_i, \tilde{e}_j ]_\La= \left\{ \begin{array}{l}
0 \Leftrightarrow i+j \neq k\\[1mm]
(-1)^i \tilde{e}_n \Leftrightarrow i+j = k
\end{array}\right.\]
Note that these  brackets define an alternating bilinear product, since if $i+j=k$, then 
\[  [\tilde{e}_j, \tilde{e}_i]_\La = (-1)^j \tilde{e}_n = (-1) ^{k-i} \tilde{e}_n =- (-1)^i \tilde{e}_n = - [\tilde{e}_i, \tilde{e}_j]_\La,\]
where we used that $k$ is odd. As $[\tilde{e}_n,\tilde{e}_i]_\La=0$ for all $i=2,3,\cdots,n$ it is trivial to see that the Jacobi identity
is satisfied (every triple Lie bracket is 0), so $[\cdot, \cdot]_\La$ does define a Lie algebra structure on $\La$.
Note that for $k=3$, $\La$ is just the abelian Lie algebra, but this is also allowed. \\[0.2cm]
Now we define a linear map $D:\La \to \La$ with $D(\tilde{e}_i)= \tilde{e}_{i+1}$ for $i=2,3,\ldots, n-1$ and $D(\tilde{e}_n)=0$.
We claim that $D\in \Der(\La)$. For this we need to show that 
\[
D([\tilde{e}_i, \tilde{e}_j]_\La) = [D(\tilde{e}_i), \tilde{e}_j]_\La + [\tilde{e}_i, D(\tilde{e}_j)]_\La \mbox{ for }i,j \in \{2,3,\ldots, n\}.
\]
The left hand side of this equation is always zero, and the equation is trivially satisfied if $i=n$ or $j=n$, so we may assume
that $i<n$ and $j<n$. We have to show that 
\[
[D(\tilde{e}_i), \tilde{e}_j]_\La + [\tilde{e}_i, D(\tilde{e}_j)]_\La = [\tilde{e}_{i+1}, \tilde{e}_j]_\La + [\tilde{e}_{i}, \tilde{e}_{j+1}]_\La=0.
\]
If $i+j\neq k-1$, then $i+j+1\neq k$ and we have that $[\tilde{e}_{i+1}, \tilde{e}_j]_\La + [\tilde{e}_{i}, \tilde{e}_{j+1}]_\La=0+0=0$.
If on the other hand $i+j=k-1$ we have that 
$[\tilde{e}_{i+1}, \tilde{e}_j]_\La + [\tilde{e}_{i}, \tilde{e}_{j+1}]_\La = (-1)^{i+1} \tilde{e}_n + (-1)^i \tilde{e}_n =0$,
which shows that $D\in \Der(\La)$. \\[0.2cm]
We now consider the $1$-dimensional abelian Lie algebra with basis $\tilde{e}_1$ and we define the linear map
$\rho: \Lb \to \Der(\La)$ with $\rho(\tilde{e}_1) = D$. As $\Lb$ is $1$-dimensional, $\rho$ is automatically a morphisms
of Lie algebras and we obtain a semidirect product Lie algebra $ \Lg_{n,k} = \La \rtimes_\rho \Lb$. 
We will use $\{ e_1= (0, \tilde{e}_1),\; e_2 =(\tilde{e}_2, 0), \; e_3= (\tilde{e}_3, 0), \ldots, e_n= (\tilde{e}_n, 0) \}$
as a basis for $\Lg_{n,k}$. It is now easy to see that the non-zero Lie brackets between basis vectors are given by:
\[
[e_1, e_i]=e_{i+1} \; \; i=2,3,\ldots, n-1\mbox{ and } [e_i, e_{k-i}] =(-1)^i e_n \mbox{ for } 2 \leq i \leq k-2.
\]
It follows that $\Lg_{n,k}$ is indeed a filiform Lie algebra and $\{e_1,e_2, \ldots, e_n\}$ is an adapted basis for $\Lg_{n,k}$.
Note that for $k=3$, $\Lg_n$ is the standard graded filiform Lie algebra $L_n$. 

\begin{lem}\label{gl}
Let $n\geq 3$ and $k$ be an odd integer with $3 \leq k \leq n$ and let $\Lg = \Lg_{n,k}$. For $m=(k+1)/2$ it holds that $[\Lg_m , \Lg_m ]=0$. 
\end{lem}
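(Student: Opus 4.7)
The plan is short and purely combinatorial on indices. By definition, $\Lg_m = \langle e_m, e_{m+1}, \ldots, e_n\rangle$, so $[\Lg_m, \Lg_m]$ is spanned by the brackets $[e_i, e_j]$ with $i, j \ge m$. Since $m = (k+1)/2 \ge 2$, neither index equals $1$, and hence the bracket $[e_1, e_i] = e_{i+1}$ is never invoked.

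First I would list the only possible source of a nonzero bracket in $\Lg_m \times \Lg_m$: from the definition of $\Lg_{n,k}$ the only nonzero brackets not involving $e_1$ are
\[
[e_i, e_{k-i}] = (-1)^i e_n \quad \text{for } 2 \le i \le k-2,
\]
and in particular they only occur when the two indices sum to $k$.

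Next I would check the index condition. If $[e_i, e_j] \neq 0$ with $i, j \ge m$, then $i + j = k = 2m - 1$. But $i \ge m$ and $j \ge m$ force $i + j \ge 2m = k+1 > k$, a contradiction. Therefore every bracket of two basis vectors in $\Lg_m$ vanishes, which gives $[\Lg_m, \Lg_m] = 0$.

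There is essentially no obstacle: the whole statement follows from the single inequality $2m > k$ once one has used that $k$ is odd to write $m = (k+1)/2$. The only thing to double-check is the boundary case $k = 3$, where $m = 2$ and $\Lg_{n,3} = L_n$ is the standard graded filiform Lie algebra; here $[\Lg_2, \Lg_2] = 0$ trivially since the only nonzero brackets are $[e_1, e_i]$.
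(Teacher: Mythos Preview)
Your proof is correct and follows exactly the same approach as the paper's: both observe that $[\Lg_m,\Lg_m]$ is spanned by brackets $[e_i,e_j]$ with $i,j\ge m\ge 2$, and then use $i+j\ge 2m=k+1>k$ to conclude that no such bracket can be nonzero. Your version is slightly more detailed (you explicitly separate off the $e_1$-brackets and check the boundary case $k=3$), but the argument is the same.
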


\begin{proof}
The subalgebra $[\Lg_m , \Lg_m ]$ is spanned by elements $[e_i, e_j]$ with $i,j\geq m\geq 2$. In this case $i+j \geq k+1$ from which
it follows that   $i+j\neq k$ and so $[e_i,e_j]=0$.
\end{proof}

In the next proposition we compute the index of the Lie algebras $\Lg_{n,k}$ and thereby we will also show that the lower
bound obtained in Theorem~\ref{lower bound} is sharp. 

\begin{prop}\label{indexgnk}
Let $n\geq 3$ and let $k$ be an odd integer with $  3 \leq k \leq n$. The index of $\Lg_{n,k}$ is $n-k+1$.
\end{prop}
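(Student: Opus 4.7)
The plan is to establish the equality $\chi(\Lg_{n,k}) = n-k+1$ by sandwiching it between matching lower and upper bounds. The lower bound is immediate from the earlier general results: Lemma~\ref{gl} gives $[\Lg_m, \Lg_m] = 0$ for $m = (k+1)/2$, and feeding this into Theorem~\ref{lower bound} yields
\[
\chi(\Lg_{n,k}) \ge n - 2(m-1) = n - (k-1) = n-k+1.
\]

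For the upper bound I would exhibit a single linear form $\ell \in \Lg_{n,k}^*$ with $\dim \Lg_{n,k}(\ell) = n-k+1$. The natural candidate is $\ell = e_n^*$. Since $e_n$ spans the center of $\Lg_{n,k}$, the coordinate $x_n$ of $x = \sum x_i e_i \in \Lg_{n,k}(\ell)$ is free, and the remaining conditions $\ell([e_i, x]) = 0$ for $1 \le i \le n-1$ form a homogeneous linear system in the variables $x_1,\ldots,x_{n-1}$ whose matrix is $A = (\ell([e_i,e_j]))_{1 \le i,j \le n-1}$. The defining brackets of $\Lg_{n,k}$ pinpoint exactly which entries of $A$ are nonzero: the only brackets producing a nonzero multiple of $e_n$ are $[e_1,e_{n-1}] = e_n$ and $[e_i,e_{k-i}] = (-1)^i e_n$ for $2\le i\le k-2$. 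Consequently the only nonzero entries of $A$ are $A_{1,n-1} = 1$, $A_{n-1,1} = -1$, and $A_{i,k-i} = (-1)^i$ for $2 \le i \le k-2$.

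The key calculation is then a rank count. The nonzero entries $A_{i,k-i}$ form an anti-diagonal of length $k-3$ supported entirely on row and column indices in $[2,k-2]$, contributing rank $k-3$. The two corner entries $A_{1,n-1}$ and $A_{n-1,1}$ lie in a row and column disjoint from that block, since $n-1 \ge k-1 > k-2$, and so they contribute an additional rank of $2$. Hence $\mathop{\rm rank}(A) = (k-3)+2 = k-1$, which gives
\[
\dim \Lg_{n,k}(\ell) = 1 + (n-1) - (k-1) = n-k+1,
\]
matching the lower bound. The degenerate case $k=3$ deserves a one-line comment: the anti-diagonal block is then empty, the rank of $2$ comes entirely from the two corner entries, and one recovers $\chi(L_n) = n-2$, consistent with \cite{ADM}.

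The only real obstacle is the disjointness bookkeeping in the rank calculation; once one confirms that the two corners at row and column $n-1$ fall outside the $[2,k-2]$-block (which is precisely where the hypothesis $k \le n$ is used), the argument is essentially automatic and the anti-diagonal structure does the rest.
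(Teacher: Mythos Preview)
Your proposal is correct and follows essentially the same approach as the paper: both obtain the lower bound from Lemma~\ref{gl} and Theorem~\ref{lower bound}, and both establish the upper bound by taking $\ell = e_n^\ast$ and computing $\dim \Lg_{n,k}(\ell)$. The only difference is cosmetic---the paper writes out the equations $\ell([e_i,x])=0$ one by one and reads off which coordinates $x_j$ are forced to vanish, whereas you identify the nonzero entries of the coefficient matrix and count its rank; these are two descriptions of the same linear-algebra computation.
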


\begin{proof}
Let $m=(k+1)/2$, then Lemma~\ref{gl} and Theorem~\ref{lower bound} imply that the index of $\Lg_{n,k}$ is at least
$n - 2(m-1) = n-k+1$. So it suffices to find a linear form $\ell \in \Lg_{n,k}^\ast$ such that $\Lg_{n,k}(\ell)$ has
dimension $n-k+1$. Take $\ell = e_n^\ast$. As before, determining which $x=x_1 e_1 +x_2 e_2 + \cdots + x_n e_n$ belong
to $\Lg_{n,k}(\ell)$ leads to $n-1$ equations $\ell ([e_i, x])=0$, $i=1,2,\ldots n-1$, involving the variables
$x_1, x_2, \ldots ,x_{n-1}$. The first equation $i=1$ is $0= e_n^\ast ([e_1, x])=0$ which reduces to
\[
x_{n-1}=0.
\]
For $2\leq i \leq k-2$ we have  $0=e_n^\ast([e_i, x]) = e_n^\ast(-x_1 e_{i+1} + x_{k-i} (-1)^i e_{n}) = -(1)^i x_{k-i}$ so
these $k-2$ equations are 
\[ 
0 = x_{k-2}= x_{k-3}= \cdots = x_3= x_2.
\]
For $k-1 \leq i \leq n-2$ we get $0=e_n^\ast([e_i, x]) = e_n^\ast(-x_1 e_{i+1} ) =0$, so these are all the trivial equation $0=0$.
Finally, the last equation  $0=e_n^\ast[e_{n-1}, x] = e_n^\ast(-x_1 e_{n} )= -x_1$ (for $i=n-1$) leads to
\[
0 = x_1.
\]
As a conclusion, we have that  $x\in \Lg_{n,k}(\ell)$ if and only if
\[
x_1 =x_2 = \cdots = x_{k-3} = x_{k-2} = x_{n-1} =0.
\]
 So the dimension of $\Lg_{n,k}(\ell)$ is $n-(k-1)$ which was to be shown.
\end{proof}

\begin{cor}
Let $n\geq 3$ and let $i \in \{1,3,5,\ldots, n-2\}$ when $n$ is odd or $i \in \{2,4,6,\ldots , n-2\}$ when $n$ is even.
Then there exits a filiform Lie algebra $\Lg$ of dimension $n$ and with $\chi(\Lg)=i$.
\end{cor}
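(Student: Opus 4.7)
The plan is simply to invoke Proposition~\ref{indexgnk} and read off the image of the map $k \mapsto n-k+1$ as $k$ ranges over the admissible odd integers. Everything we need has already been done: the construction of the family $\Lg_{n,k}$ guarantees these are filiform of dimension $n$, and the index has been computed explicitly.

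First I would record the parity observation: since $k$ is required to be odd and $3 \le k \le n$, the quantity $i = n-k+1$ has the same parity as $n$. This is what forces the corollary to be stated with the two parity cases. Next I would determine the precise range of $i$ in each case. If $n$ is odd, then the largest allowed odd $k$ is $k=n$, and as $k$ runs through $\{3,5,\ldots,n\}$, the value $i = n-k+1$ runs through $\{n-2, n-4, \ldots, 1\}$, i.e.\ exactly the set $\{1,3,\ldots,n-2\}$. If $n$ is even, then the largest allowed odd $k$ is $k=n-1$, and as $k$ runs through $\{3,5,\ldots,n-1\}$, the value $i = n-k+1$ runs through $\{n-2,n-4,\ldots,2\}$, i.e.\ the set $\{2,4,\ldots,n-2\}$.

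Finally, for a prescribed $i$ in the allowed set, I would take $k = n-i+1$; this $k$ is an odd integer with $3 \le k \le n$ by the previous step, so $\Lg_{n,k}$ is a well-defined filiform Lie algebra of dimension $n$, and Proposition~\ref{indexgnk} yields $\chi(\Lg_{n,k}) = n-k+1 = i$. There is no real obstacle here; the only thing to be careful about is the boundary case $k=n$ (permitted exactly when $n$ is odd) versus $k \le n-1$ (needed when $n$ is even), which is already built into the hypothesis $3 \le k \le n$ of Proposition~\ref{indexgnk}.
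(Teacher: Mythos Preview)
Your proposal is correct and follows exactly the same approach as the paper: set $k=n-i+1$, verify it is odd with $3\le k\le n$, and invoke Proposition~\ref{indexgnk} to get $\chi(\Lg_{n,k})=n-k+1=i$. Your write-up is simply more detailed than the paper's one-line argument, spelling out the parity and range considerations that the paper leaves implicit.
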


\begin{proof}
Take $k=n-i+1$. Then $k$ is odd and $3\leq k \leq n$ and therefore we can consider the Lie algebra $\Lg_{n,k}$. The index of
this Lie algebra is $n-k+1 = n - (n-i+1) +1 =i$.
\end{proof}

The above proof also shows the following result on the graded filiform Lie algebra $Q_n$, for $n$ even.

\begin{prop}
Let $\Lg$ be a filiform Lie algebra whose associated graded Lie algebra is isomorphic to $Q_n$. Then we have $\chi(\Lg)=2$.
\end{prop}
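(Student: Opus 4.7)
The plan is to adapt the method used for Proposition~\ref{indexgnk} by taking $\ell=e_n^\ast$ and bounding the rank of the skew-symmetric matrix $M=(\ell([e_i,e_j]))$. First, fix an adapted basis $\{e_1,\ldots ,e_n\}$ of $\Lg$ in the Vergne normal form recalled at the beginning of Section~6. The hypothesis that the associated graded Lie algebra of $\Lg$ is isomorphic to $Q_n$ forces $n$ to be even and the Vergne parameter $\alpha$ to be non-zero; after rescaling $e_n$ one may assume $\alpha=1$, so that $[e_i,e_{n+1-i}]=(-1)^ie_n$ for $2\le i\le n-1$. Combined with the general Vergne relations $[e_i,e_j]\in \langle e_{i+j},\ldots ,e_n\rangle$ whenever $i+j\le n$ and the observation that $[e_i,e_j]=0$ when $i+j>n+1$ (since $\Lg$ is filiform of dimension $n$), this gives enough structural information about $M$ to compute $\chi(\Lg)$.

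The main step is to prove ${\rm rank}_K(M)\ge n-2$, equivalently $\dim \Lg(\ell)\le 2$. I would examine the $(n-2)\times (n-2)$ submatrix obtained by taking rows indexed by $i\in \{2,\ldots ,n-1\}$ and reordering the columns as $n-1,n-2,\ldots ,2$. At submatrix position $(i,i')$ the entry equals $M_{i,n+1-i'}$: when $i>i'$ the sum $i+(n+1-i')$ strictly exceeds $n+1$, so $[e_i,e_{n+1-i'}]=0$ and the entry vanishes; when $i=i'$ the entry equals $(-1)^i\ne 0$. Hence the submatrix is upper triangular with non-zero diagonal, so it has full rank $n-2$, whence $\chi(\Lg)\le \dim \Lg(\ell)\le 2$.

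For the matching lower bound, the nilpotency of $\Lg$ gives $\chi(\Lg)\ge \dim Z(\Lg)\ge 1$ by part~(1) of Proposition~\ref{2.3}, while the even codimension of $\Lg(\ell)$ in $\Lg$ forces $\chi(\Lg)\equiv n\equiv 0\pmod 2$; together these yield $\chi(\Lg)\ge 2$, so $\chi(\Lg)=2$. The most delicate step I anticipate is verifying the upper triangular shape of the submatrix: it simultaneously uses filiformness (to kill entries with $i+j>n+1$) and the $Q_n$-hypothesis (to produce the non-zero diagonal entries from $\alpha\ne 0$). The interior entries $M_{ij}$ with $i+j<n+1$ are generally non-zero, but they lie strictly above the diagonal in this ordering and so do not affect the rank count.
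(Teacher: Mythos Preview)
Your proposal is correct and takes essentially the same approach as the paper: choose $\ell=e_n^\ast$ and show that the resulting skew form has rank $n-2$, so that $\dim\Lg(\ell)=2$. You fill in details that the paper's one-line sketch leaves implicit (the upper-triangular submatrix argument for the rank bound, and the parity argument for the lower bound $\chi(\Lg)\ge 2$); one small imprecision is that rescaling $e_n$ alone would break the adapted-basis relation $[e_1,e_{n-1}]=e_n$, but since your argument only uses $\alpha\neq 0$ rather than $\alpha=1$, this is harmless.
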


\begin{proof}
The fact that the associated graded Lie algebra of $\Lg$ is isomorphic to $Q_n$ means that $\Lg$ has an adapted basis in
which $[e_i, e_{n+1-i}] =(-1)^i e_n \mbox{ for } 2 \leq i \leq n-1$ holds.
By choosing $\ell=e_n^\ast$ and proceeding as before with the system of linear equations $0=e_n^\ast([e_i,x])$, one can see
that the dimension of $\Lg(\ell)$ is $2$, which implies that the index is also $2$.  
\end{proof}

\section*{Acknowledgments}
Dietrich Burde is supported by the FWF, the Austrian Science Foun\-da\-tion, with the grant DOI 10.55776/P33811. 
Karel Dekimpe is supported by Methusalem grant Meth/21/03, a long term structural funding of the Flemish Government.
For open access purposes, the authors have applied a CC BY public copyright license to any author-accepted manuscript version arising
from this submission.


\begin{thebibliography}{99}


\bibitem{ADM} H. Adimi, A. Makhlouf: {\em Index of graded filiform and quasi filiform Lie algebras}.
Filomat \textbf{27} (2013), no. 3, 467--483.

\bibitem{CHM} J.-Y. Charbonnel, A. Moreau: {\em The index of centralizers of elements of reductive Lie algebras}.
Doc.\ Math.\ \textbf{15} (2010), 387--421.

\bibitem{BU39} D. Burde, M. Ceballos: {\em  Abelian ideals of maximal dimension for solvable Lie
algebras}. Journal of Lie Theory, Vol. \textbf{22} (2012), Issue 3, 741--756.
  
\bibitem{BU79} D. Burde, J.~T. van Velthoven: {\em Characteristically nilpotent Lie groups with flat coadjoint orbits}.
To appear in Journal of Lie Theory (2025).

\bibitem{COG} L.~J. Corwin, F.~P. Greenleaf: {\em Representations of nilpotent {Lie} groups and
their applications, Part 1: Basic theory and examples}. Vol. \textbf{18} of {\em Camb. Stud. Adv. Math.}, Cambridge University Press, 1990.
     
\bibitem{DEK} V. Dergachev, A. Kirillov: {\em Index of Lie algebras of seaweed type}.
Journal of Lie Theory (2000), Volume \textbf{10}, Issue $2$, page 331--343.

\bibitem{DIX} J. Dixmier: {\em Enveloping algebras}. Revised reprint of the 1977 translation,
Grad.\ Stud.\ Math.\ \textbf{11}, American Mathematical Society, Providence, RI, 1996. 

\bibitem{LOV} L. Lov\'asz: {\em On determinants, matchings, and random algorithms}. In L. Budach, editor, Fundamentals of
Computation Theory, pages 565--574. Akademie-Verlag, 1979.

\bibitem{NEP} M. Nesterenko, R. Popovych: {\it Contractions of low-dimensional Lie algebras}.
J.\ Math.\ Phys.\  \textbf{47}  (2006),  no. 12, 123515, 45 pp.

\bibitem{OOM1} A.~I. Ooms: {\em On certain maximal subfields in the quotient division ring of an enveloping algebra}.
Journal of Algebra \textbf{230} (2000), 694--712.

\bibitem{OOM2} A.~I. Ooms: {\em The maximal abelian dimension of a Lie algebra, Rentschler's property and Milovanov's conjecture}.
Algebras and Representation Theory \textbf{23} (2020), no. 3, 963--999.

\bibitem{PAN}  D.~I. Panyushev: {\em The index of a Lie algebra, the centralizer of a nilpotent 
element, and the normalizer of the centralizer}.
Math.\ Proc.\ Cambridge Philos.\ Soc.\ \textbf{134} (2003), no. 1, 41--59.

\bibitem{POR} E.~N. Poroshenko: {\em Commutator width of elements in a free metabelian Lie algebra}.
Algebra and Logic, \textbf{53} (2014), no. 5, 377--396.

\bibitem{TAY}  P. Tauvel, R. W. T. Yu: {\it Lie algebras and algebraic groups}. Springer Monographs in
Mathematics (2005), 653 pages.

\bibitem{VER} M. Vergne: {\em Cohomologie des alg\`ebres de Lie nilpotentes. Application \`a l'\'etude
de la vari\'et\'e des alg\`ebres de Lie
nilpotentes}. Bull.\ Soc.\ Math.\ France \textbf{98} (1970), 81--116.




\end{thebibliography}
\end{document}